\documentclass[11pt,a4paper]{article}

\usepackage[utf8]{inputenc} 
\usepackage[T1]{fontenc}
\usepackage[english]{babel}
\usepackage{amsmath,amssymb,amsthm,mathtools}
\usepackage{enumitem}
\usepackage{microtype}
\usepackage[top=2cm, bottom=1.5cm, left=2cm, right=1cm]{geometry}
\usepackage{xcolor}
\usepackage{hyperref}

\hypersetup{
  colorlinks=true,
  linkcolor=blue,
  citecolor=blue,
  urlcolor=blue
}

\newcommand{\Fin}{\mathrm{Fin}}
\newcommand{\FProd}{\mathrm{FProd}}
\newcommand{\MS}{\mathbb{N}^{(I)}}
\newcommand{\ProdEnum}[2]{\prod_{k=1}^{#1} #2}

\theoremstyle{definition}
\newtheorem{defn}{Definition}[section]
\newtheorem{ex}{Example}[section]

\theoremstyle{plain}
\newtheorem{thm}{Theorem}[section]
\newtheorem{lem}{Lemma}[section]
\newtheorem{prop}{Proposition}[section]
\newtheorem{cor}{Corollary}[section]

\theoremstyle{remark}
\newtheorem{rem}{Remark}[section]

\title{Finite products in commutative monoids: well-definition, recursion on finite subsets, and why the empty product is $1$.}
\author{
João Victor Monteiros de Andrade \\
Department of Computer Science \\
University of Brasília \\
\texttt{andrade.monteiros@aluno.unb.br} \\
\texttt{jotandrade98@gmail.com}
\and
Leonardo Santos da Cruz \\
Department of Computer Science \\
University of Brasília \\
\texttt{santos-cruz.sc@aluno.unb.br}  
}
\date{}

\begin{document}
\maketitle


\begin{abstract}
The convention "empty product $=1$"  is ubiquitous in mathematics, but often appears without an explicit structural justification. This note provides a self-contained reference to this fact in the context of commutative monoids. We construct the product of an indexed family by a finite set, prove its enumeration independence, and show that it is uniquely characterized by a recursion scheme in $\Fin(I)$: value in the empty set and insertion rule of a new index. In particular, the value of the empty product is necessarily the neutral element $1$. We further record two complementary and independent justifications of this fact: one via the list-free monoid and another via distributive identities in semi-rings. Next, we formulate the same phenomenon in universal terms by means of the commutative multiset-free monoid of finite support. We also discuss partially commutative extensions, via trace monoids and heaps, and include brief applications in linear algebra, survival statistics, category theory, and analysis. The corresponding additive version recovers, by the same principle, the identity "empty sum $=0$".
\end{abstract}

\medskip
\noindent\textbf{MSC 2020:} 20M14, 18A32. \quad
\textbf{Keywords:} empty product; commutative monoid; recursion on $\Fin(I)$; free monoid; trace monoids.

\tableofcontents

\section{Introduction}

The expression $\prod_{x\in P} a(x)$ is unambiguous when $P$ is a finite list (with a given order).

For a finite \emph{set} $P$ without a prescribed order, however, the notation requires justification:
different enumerations can generate products in different orders. In commutative monoids,
associativity and commutativity guarantee the independence of the enumeration, making it
meaningful to directly define the product over finite sets.

The main focus of this text is to fill a recurring expository gap in the literature.

In many areas, the convention ``\emph{empty product $=1$}'' is adopted as a standard without any
structural justification, even though this value is in fact \emph{forced} by minimum algebraic requirements: recursive uniqueness, preservation of the identity element by homomorphisms, and consistency with distributive identities. In combinatorics and algebraic manipulations, the convention avoids awkward case distinctions and preserves identities in the limiting cases $n=0$ or $n=1$ \cite{GKP94,Stanley11}. In probability, statistics, and stochastic processes, it often appears as an explicit convention.\cite{MicloZhang21,Gaunt18,Mance15,ErnstReinertSwan22}. 
In theoretical computer science, this appears side by side with the ``empty sum $=0$'' in inductive arguments \cite{EsparzaKuceraMayr06}. In category theory, the analogous statement is that the product of an empty family is a terminal object (whenever such a product is defined) \cite{Riehl16}. Our goal is to show that all these occurrences reflect the same simple structural principle, formalized via finite products in commutative monoids. To this end, we work with $\Fin(I)$, which denotes (for convenience) the set of all finite subsets of an index set $I$, that is, $\Fin(I)=\{P\subseteq I:\ |P|<\infty\}.$ More precisely, we show that the assignment $P \mapsto \prod_{x\in P} a(x)$ over finite sets is uniquely characterized by a recursion scheme on $\Fin(I)$: (i) in the empty case the value is $1$, and (ii) when inserting a new index $x\notin P$, the product is updated by multiplying by $a(x)$. In particular, this characterization necessarily implies $\prod_{\varnothing} a(x)=1.$ The construction and well-definedness are presented in Section~\ref{sec:well-defined}, and the ``empty + insertion'' clauses together with the uniqueness theorem are developed in Sections~\ref{sec:empty-insert}--\ref{sec:unique}.

\medskip
\noindent
\textbf{The additive parallel.}
Although the main focus of this note is multiplicative, the same formal mechanism has an entirely parallel additive counterpart. If $(A,+,0)$ is a commutative additive monoid, then the assignment $P\mapsto \sum_{x\in P} a(x)$ on finite subsets is equally characterized by a recursion scheme in $\Fin(I)$: the empty case has value $0$, and the insertion of a new index updates the sum by adding the new term. In particular, the identity ``\emph{empty sum $=0$}'' should not be seen as a mere independent notational convention, but as the additive version of the same structural principle that forces ``\emph{empty product $=1$}''. This symmetry will appear explicitly later, both in the Corollary ~\ref{cor:empty-sum} and in concrete examples, and helps to situate the phenomenon discussed here within a more general algebraic framework.

\medskip
\noindent
\textbf{The connection with algebra and the "why" of $1$.}
In algebra, the same idea reappears whenever substructures are required to be closed under finite products: if “closed under finite products” includes the case of zero factors, then the identity element must automatically belong to the substructure. This observation was explored by Poonen, who interprets the presence of (1) as part of the expected behavior of “finite products” in associative structures, including the product of an empty sequence \cite{Poonen19}; see also the complementary discussion by Conrad \cite{ConradRingdefs}. In categorical language, this appears in a structural way: a terminal object is precisely a null product (the product of an empty family), which explains the notation (1) for the case of arity zero \cite{AwodeyCT}. 

A substantial part of the central results of this article (well-definedness, the recursion principle, and the uniqueness theorem of Sections~\ref{sec:well-defined}--\ref{sec:unique}) was formally verified in the proof assistant Isabelle/HOL \cite{NipkowPaulsonWenzel2002}. The complete theory files accompany this text as supplementary material, and a reorganized version will be submitted to the \emph{Archive of Formal Proofs} (AFP) \cite{AFP}.

\medskip
\noindent
\textbf{Structure of the paper.}
\begin{enumerate}[label=(\arabic*)]
\item \S\ref{sec:well-defined}: construction of $\FProd$ and well-definition.
\item \S\ref{sec:empty-insert}--\S\ref{sec:unique}: induction in $\Fin(I)$ and uniqueness (empty + insertion).
\item \S\ref{sec:other-proofs}: two independent justifications for the empty product.
\item \S\ref{sec:multisets}: universal formulation via multisets (commutative free monoid).
\item \S\ref{sec:noncomm}--\S\ref{sec:traces}: non-commutative variations and partially commutative products (traces/heaps).
\item \S\ref{sec:apps}: four short applications (determinant $0\times0$, Kaplan-Meier,
empty product as a terminal object and infinite products in analysis) and a gallery of
explicit occurrences in the literature.
\end{enumerate}
\section{Preliminaries}

\begin{defn}[Commutative monoid]
A commutative multiplicative monoid is a triple $(A,\cdot,1)$ where $A$ is a set,
$\cdot:A\times A\to A$ is associative and commutative, and $1\in A$ is neutral:
$1\cdot a=a\cdot 1=a$ for all $a\in A$.

\end{defn}

\begin{defn}[Finite Subsets]
Let $I$ be a set. We denote by
\[
\Fin(I):=\{P\subseteq I : P \text{ is finite}\}
\]
the set of all finite subsets of $I$.
\end{defn}

\begin{defn}[Indexed Family]

Given a function $a:I\to A$, we write $a_i:=a(i)$ and call $(a_i)_{i\in I}$ a family in $A$.
\end{defn}

\section{Finite Product over Sets: Construction and Well-Definedness}\label{sec:well-defined}

\begin{defn}[Product via enumeration]\label{def:enumprod}
Let $P\in\Fin(I)$ with $|P| = n$, and let $e:\{1,\dots,n\}\to P$ be a bijection.

We define
\[
\Pi(a;e)\;:=\;\ProdEnum{n}{a_{e(k)}}\in A,
\]
with the convention $\Pi(a;e)=1$ when $n=0$ (that is, $P=\varnothing$).

\end{defn}

\begin{lem}[Invariance under permutation]\label{lem:perm}
Let $x_1,\dots,x_n\in A$. For every permutation $\sigma\in S_n$,

\[
x_1\cdots x_n \;=\; x_{\sigma(1)}\cdots x_{\sigma(n)}.
\]

\end{lem}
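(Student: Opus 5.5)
We argue by induction on $n$, reading $x_1\cdots x_n$ as the left-nested product $(\cdots((x_1x_2)x_3)\cdots)x_n$. By associativity (generalized associativity, itself a routine induction on the number of factors) any bracketing yields the same element, so the placement of parentheses can be ignored. The base cases $n=0$ and $n=1$ are trivial. For $n=0$ both sides equal the empty product $1$ by the convention of Definition~\ref{def:enumprod}. For $n=1$ the only permutation is the identity.

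For the inductive step, assume the claim holds for all sequences of length $n-1$ and all $\tau\in S_{n-1}$, and let $\sigma\in S_n$. Let $j:=\sigma^{-1}(n)$ be the position at which the factor $x_n$ occurs on the right-hand side. Then
\[
x_{\sigma(1)}\cdots x_{\sigma(n)} \;=\; \bigl(x_{\sigma(1)}\cdots x_{\sigma(j-1)}\bigr)\, x_n\, \bigl(x_{\sigma(j+1)}\cdots x_{\sigma(n)}\bigr).
\]
Using associativity and then commutativity once, applied to $x_n$ and the block $B:=x_{\sigma(j+1)}\cdots x_{\sigma(n)}$, we can move $x_n$ to the end:
\[
x_{\sigma(1)}\cdots x_{\sigma(n)} \;=\; \bigl(x_{\sigma(1)}\cdots x_{\sigma(j-1)}\,x_{\sigma(j+1)}\cdots x_{\sigma(n)}\bigr)\, x_n .
\]
The parenthesized factor is a product of $x_1,\dots,x_{n-1}$ in the order given by some $\tau\in S_{n-1}$. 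Explicitly, $\tau(k)=\sigma(k)$ for $k<j$ and $\tau(k)=\sigma(k+1)$ for $k\ge j$. This $\tau$ is a bijection onto $\{1,\dots,n-1\}$ because we have deleted exactly the single index mapped to $n$. The induction hypothesis then gives that this factor equals $x_1\cdots x_{n-1}$. Multiplying by $x_n$ yields $x_1\cdots x_n$, which completes the step.

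The main obstacle is bookkeeping rather than algebra. Two things need care. First, we must verify that $\tau$ really is a permutation of $\{1,\dots,n-1\}$. Second, we must justify regrouping the product into the three blocks, which is where generalized associativity is needed. If generalized associativity is not available from elsewhere, we would prove it first by a separate induction. That auxiliary statement is $(x_1\cdots x_m)(y_1\cdots y_r)=x_1\cdots x_m y_1\cdots y_r$ for left-nested products, proved by induction on $r$ using only the associative law. The edge cases $j=n$ (where $B$ is empty and no commutation is needed) and $j=1$ are covered by taking the empty block to be $1$ and using neutrality.
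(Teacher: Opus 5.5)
Your proof is correct, but it takes a different route from the paper. The paper reduces to adjacent transpositions: because these generate $S_n$, it only needs to regroup $x_1\cdots x_n$ as $(x_1\cdots x_{i-1})(x_ix_{i+1})(x_{i+2}\cdots x_n)$ and commute the middle pair. That version is shorter and isolates the single algebraic step. However, it relies on the fact that adjacent transpositions generate $S_n$ for the combinatorics. It also tacitly uses that invariance is stable under composition: applying the statement for $\tau$ to the reordered sequence $y_k=x_{\sigma(k)}$ gives it for $\sigma\circ\tau$. Your induction on $n$ pulls $x_n$ to the end with one commutation against the block $B$, then applies the hypothesis to the induced $\tau\in S_{n-1}$. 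This makes it self-contained: the induction itself does the combinatorial work that the generation theorem supplies in the paper, and no composition argument is needed. It also makes the reliance on generalized associativity explicit. Its ``split off the last factor and recurse'' shape matches the insertion-step recursion of Lemma~\ref{lem:insert} and Theorem~\ref{thm:unique}. The cost is the bookkeeping you already flag: checking that $\tau$ is a bijection of $\{1,\dots,n-1\}$, and treating the empty blocks when $j=1$ or $j=n$.
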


\begin{proof}
It is sufficient to treat adjacent transpositions, since every permutation is a product of them.

If $\tau$ swaps $i$ and $i+1$ and fixes the others, then by associativity
\[
x_1\cdots x_n
= (x_1\cdots x_{i-1})\cdot (x_i\cdot x_{i+1})\cdot (x_{i+2}\cdots x_n),
\]
and by commutativity $x_i\cdot x_{i+1}=x_{i+1}\cdot x_i$.

\end{proof}

\begin{prop}[Enumeration Independence]\label{prop:enum-indep}
Let $P\in\Fin(I)$ and let $e,e':\{1,\dots,n\}\to P$ be bijections.

Then $\Pi(a;e)=\Pi(a;e')$.

\end{prop}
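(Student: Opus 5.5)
The plan is to reduce the statement to Lemma~\ref{lem:perm}: I will show that the two enumerations differ by a permutation of the index set $\{1,\dots,n\}$. First I dispose of the degenerate case $n=0$. Then $P=\varnothing$, and both $\Pi(a;e)$ and $\Pi(a;e')$ equal $1$ by the convention in Definition~\ref{def:enumprod}, so there is nothing to prove.

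For $n\ge 1$, I set $\sigma:=e^{-1}\circ e'\colon\{1,\dots,n\}\to\{1,\dots,n\}$. This is a composition of bijections, hence $\sigma\in S_n$, and it satisfies $e'=e\circ\sigma$, i.e.\ $e'(k)=e(\sigma(k))$ for every $k$. Next I put $x_k:=a_{e(k)}\in A$ for $k=1,\dots,n$. Then
\[
\Pi(a;e)=x_1\cdots x_n,\qquad \Pi(a;e')=\prod_{k=1}^{n} a_{e(\sigma(k))}=x_{\sigma(1)}\cdots x_{\sigma(n)}.
\]
Lemma~\ref{lem:perm} applied to $x_1,\dots,x_n$ and $\sigma$ gives the equality of these two products, which is the claim.

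No step here is a real obstacle. The only points needing care are bookkeeping: checking that $e^{-1}$ exists because $e$ is a bijection onto $P$, and that $\sigma$ really is a permutation. The substantive work sits in Lemma~\ref{lem:perm}, and I take it as given, including its reduction to adjacent transpositions. If one wanted to be fully rigorous there, that reduction would need an induction on the number of adjacent transpositions, applying the one-swap identity at each stage. I note for completeness that the product $x_1\cdots x_n$ is well defined without parentheses by generalized associativity, which the lemma already implicitly assumes.
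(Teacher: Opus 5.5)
Your proposal is correct and takes the same approach as the paper. You write $e'=e\circ\sigma$ with $\sigma=e^{-1}\circ e'\in S_n$ and apply Lemma~\ref{lem:perm} to the elements $x_k=a_{e(k)}$. Your separate handling of $n=0$ and your explicit construction of $\sigma$ are only slightly more careful bookkeeping than the paper gives.
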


\begin{proof}
Since $e,e'$ are bijections with the same codomain, there exists $\sigma\in S_n$ such that $e'=e\circ\sigma$.

Therefore,

\[
\Pi(a;e')=\ProdEnum{n}{a_{e' (k)}}=\ProdEnum{n}{a_{e(\sigma(k))}}
=\ProdEnum{n}{a_{e(k)}}=\Pi(a;e),
\]
where we used the Lemma~\ref{lem:perm}.

\end{proof}

\begin{defn}[Finite product over a set]\label{def:fprod}
For $P\in\Fin(I)$, we define
\[
\FProd(a,P)\;:=\;\Pi(a;e),
\]
where $e$ is any bijection $\{1,\dots,|P|\}\to P$.

The Proposition ~\ref{prop:enum-indep} guarantees that $\FProd(a,P)$ is well-defined.

\end{defn}

\begin{ex}[Example in $(\mathbb{R},\cdot,1)$]
Consider the commutative monoid $(A,\cdot,1)=(\mathbb{R},\cdot,1)$, take $I=\mathbb{N}$ and define
$a:I\to A$ by $a(i)=i+1$.
For $P=\{1,3,4\}\in\Fin(I)$, we obtain
\[
\FProd(a,P)=a(1)\,a(3)\,a(4)=2\cdot 4\cdot 5=40,
\]
regardless of the enumeration chosen for $P$.
\end{ex}


\section{Empty case and insertion step}\label{sec:empty-insert}

\begin{lem}[Empty product]\label{lem:empty}
\[
\FProd(a,\varnothing)=1.
\]
\end{lem}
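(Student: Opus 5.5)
The plan is to unwind Definition~\ref{def:fprod} in the degenerate case $P=\varnothing$ and let the convention built into Definition~\ref{def:enumprod} do the work. The argument is essentially definitional, so the only real task is to check that every ingredient of the definition makes sense when $n=0$.

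First, take $P=\varnothing$, so $|P|=0$ and the index set $\{1,\dots,0\}$ is empty. The empty function $e_0:\varnothing\to\varnothing$ is a bijection, and it is the unique such map. Hence there is at least one enumeration of $P$ available, and Definition~\ref{def:fprod} applies.

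Second, by Definition~\ref{def:fprod}, $\FProd(a,\varnothing)=\Pi(a;e_0)$. By the explicit clause in Definition~\ref{def:enumprod}, $\Pi(a;e)=1$ whenever $n=0$, so $\Pi(a;e_0)=1$.

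Third, I would remark that Proposition~\ref{prop:enum-indep} is not actually needed here: $e_0$ is the only bijection $\varnothing\to\varnothing$, so well-definedness is trivial in this case. Alternatively, one can note that $S_0$ is the trivial group, and the proof of Proposition~\ref{prop:enum-indep} goes through vacuously.

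The only point that needs any care is the first step. One has to confirm that the set of enumerations of $\varnothing$ is nonempty, so that Definition~\ref{def:fprod} is not vacuous at $P=\varnothing$, and that the expression $\prod_{k=1}^{0}$ is governed by the stated convention rather than left undefined. Once the empty function is identified as the enumeration, the value $1$ follows immediately. This is why the substantive justification for the value $1$ is deferred to the later uniqueness results of Section~\ref{sec:unique}.
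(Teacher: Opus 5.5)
Your proposal is correct and matches the paper's proof, which likewise just observes that $|P|=0$ and invokes the $n=0$ convention of Definition~\ref{def:enumprod}. Your extra check that the empty map is the unique enumeration of $\varnothing$ (so Definition~\ref{def:fprod} applies without needing Proposition~\ref{prop:enum-indep}) is a harmless, slightly more careful addition.
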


\begin{proof}
If $P=\varnothing$, then $|P|=0$ and the convention in the Definition~\ref{def:enumprod} gives $\FProd(a,\varnothing)=1$.
\end{proof}

\begin{lem}[Insertion step]\label{lem:insert}
If $P\in\Fin(I)$ e $x\in I\setminus P$, then
\[
\FProd(a,P\cup\{x\})=\FProd(a,P)\cdot a(x).
\]
\end{lem}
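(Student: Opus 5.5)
The plan is to compute both sides using one well-chosen enumeration and then invoke the well-definedness guaranteed by Proposition~\ref{prop:enum-indep}. Let $n=|P|$. Since $x\notin P$, the set $Q:=P\cup\{x\}$ is finite with $|Q|=n+1$, so $Q\in\Fin(I)$.

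Fix any bijection $e:\{1,\dots,n\}\to P$; when $n=0$ this is the empty map. Extend it to $e^+:\{1,\dots,n+1\}\to Q$ by setting $e^+(k)=e(k)$ for $k\le n$ and $e^+(n+1)=x$. This map is a bijection because $e$ is a bijection onto $P$ and $x\notin P$, which gives both injectivity and surjectivity onto $Q$. By Definition~\ref{def:fprod}, which we may apply to any enumeration by Proposition~\ref{prop:enum-indep},
\[
\FProd(a,Q)=\Pi(a;e^+)=\prod_{k=1}^{n+1} a_{e^+(k)} .
\]
We then split off the last factor. By the meaning of the iterated product (left-to-right bracketing, or any bracketing by associativity), this equals $\bigl(\prod_{k=1}^{n} a_{e(k)}\bigr)\cdot a(x)=\Pi(a;e)\cdot a(x)=\FProd(a,P)\cdot a(x)$.

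The only subtle point is the case $n=0$. There the empty-product convention of Definition~\ref{def:enumprod} makes $\Pi(a;e)=1$, and we must check that $\prod_{k=1}^{1}a_{e^+(k)}=a(x)=1\cdot a(x)$. This holds by neutrality of $1$, and together with Lemma~\ref{lem:empty} it gives the claim. I expect the main (minor) obstacle to be making the identity $\prod_{k=1}^{n+1}=\bigl(\prod_{k=1}^{n}\bigr)\cdot(\text{last})$ rigorous. I would treat it as the recursive definition of the enumerated product, with base value $1$. Under that reading, the cases $n=0$ and $n\ge1$ are handled uniformly.
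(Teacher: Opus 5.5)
Your proposal is correct and follows essentially the same route as the paper: enumerate $P$ by $e$, extend it to $e^+$ by placing $x$ in position $n+1$, and split off the last factor of $\Pi(a;e^+)$. Your additional checks are details the paper leaves implicit, namely that $e^+$ is a bijection because $x\notin P$, and the $n=0$ case via neutrality of $1$.
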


\begin{proof}
Let $n=|P|$ and take a bijection $e:\{1,\dots,n\}\to P$. Define $e^+:\{1,\dots,n+1\}\to P\cup\{x\}$
by $e^+(k)=e(k)$ for $1\le k\le n$ and $e^+(n+1)=x$. Then
\[
\FProd(a,P\cup\{x\})
=\Pi(a;e^+)
=\left(\ProdEnum{n}{a_{e(k)}}\right)\cdot a(x)
=\FProd(a,P)\cdot a(x).
\]
\end{proof}

\section{Induction in \texorpdfstring{$\Fin(I)$}{Fin(I)} and uniqueness}\label{sec:unique}

\begin{lem}[Induction in  $\Fin(I)$]\label{lem:fin-ind}
Let $\mathcal{S}\subseteq \Fin(I)$ be such that:
\begin{enumerate}[label=(\alph*)]
\item $\varnothing\in \mathcal{S}$;
\item if $Q\in \mathcal{S}$ e $x\in I\setminus Q$, then $Q\cup\{x\}\in \mathcal{S}$.
\end{enumerate}
Then $\mathcal{S}=\Fin(I)$.
\end{lem}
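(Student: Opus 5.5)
We argue by ordinary induction on the cardinality $n=|P|$. For each $n\in\mathbb{N}$ we consider the statement
\[
H(n):\quad \text{every } P\in\Fin(I) \text{ with } |P|=n \text{ belongs to } \mathcal{S}.
\]
Since every element of $\Fin(I)$ has some finite cardinality $n\in\mathbb{N}$, proving $H(n)$ for all $n$ gives $\Fin(I)\subseteq\mathcal{S}$. The reverse inclusion $\mathcal{S}\subseteq\Fin(I)$ holds by hypothesis, so we obtain the equality $\mathcal{S}=\Fin(I)$.

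The base case $H(0)$ is immediate. The only subset of cardinality $0$ is $\varnothing$, and $\varnothing\in\mathcal{S}$ by hypothesis (a).

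For the inductive step, assume $H(n)$ and let $P\in\Fin(I)$ with $|P|=n+1$. Since $P$ is nonempty, we may choose some $x\in P$ and put $Q:=P\setminus\{x\}$. Then $Q$ is finite, so $Q\in\Fin(I)$, and $|Q|=n$. By $H(n)$ we get $Q\in\mathcal{S}$. Moreover $x\in I\setminus Q$, so hypothesis (b) yields $Q\cup\{x\}\in\mathcal{S}$. Finally $Q\cup\{x\}=P$, hence $P\in\mathcal{S}$, which proves $H(n+1)$.

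The only step needing some care is the cardinality fact $|P\setminus\{x\}|=|P|-1$ for $x\in P$. It follows from the bijections $\{1,\dots,n+1\}\to P$ used in Definition~\ref{def:enumprod}. Given such a bijection, we compose with a transposition so that $x$ is the image of $n+1$, and then restrict to $\{1,\dots,n\}$. This is the same manipulation as in the proof of Lemma~\ref{lem:insert}, run in reverse. Apart from this, the argument is routine, and it is the finiteness of $P$ that makes the induction on $|P|$ available.
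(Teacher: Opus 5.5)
Your proof is correct and takes the same approach as the paper: induction on $|P|$, where the step removes some $x\in P$, applies the hypothesis to $Q=P\setminus\{x\}$, and reinserts $x$ via (b). Your extra remarks, namely the trivial inclusion $\mathcal{S}\subseteq\Fin(I)$ and the justification of $|P\setminus\{x\}|=n$ via a transposed enumeration, are details the paper leaves implicit.
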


\begin{proof}
By induction on $n=|P|$. For $n=0$, $P=\varnothing\in\mathcal{S}$. For the step, given $|P|=n+1$ choose
$x\in P$ and set $Q=P\setminus\{x\}$. Then $|Q|=n$ and $Q\in\mathcal{S}$; since $x\notin Q$, it follows
$Q\cup\{x\}=P\in\mathcal{S}$.
\end{proof}

\begin{thm}[Recursion/Uniqueness of the finite product]\label{thm:unique}
Let $(A,\cdot,1)$ be a commutative monoid, $I$ a set and $a:I\to A$.
If $f:\Fin(I)\to A$ satisfies:
\begin{enumerate}[label=(\arabic*)]
\item $f(\varnothing)=1$;
\item for all $Q\in\Fin(I)$ e $x\in I\setminus Q$,
\[
f(Q\cup\{x\})=f(Q)\cdot a(x),
\]
\end{enumerate}
then, for all $P\in\Fin(I)$,
\[
f(P)=\FProd(a,P).
\]
\end{thm}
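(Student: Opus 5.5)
The plan is to apply the induction principle of Lemma~\ref{lem:fin-ind} to the set on which $f$ and $\FProd(a,\cdot)$ agree. Concretely, I would define
\[
\mathcal{S}:=\{P\in\Fin(I):\ f(P)=\FProd(a,P)\}\subseteq\Fin(I),
\]
and verify the two closure conditions (a) and (b) of that lemma. The conclusion $\mathcal{S}=\Fin(I)$ is then exactly the claim.

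For condition (a), hypothesis (1) gives $f(\varnothing)=1$, and Lemma~\ref{lem:empty} gives $\FProd(a,\varnothing)=1$. Hence $\varnothing\in\mathcal{S}$.

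For condition (b), take $Q\in\mathcal{S}$ and $x\in I\setminus Q$. Note that $Q\cup\{x\}$ is again finite, so it lies in $\Fin(I)$ and both sides are defined. Applying hypothesis (2), then the inductive assumption $f(Q)=\FProd(a,Q)$, then Lemma~\ref{lem:insert}, we get
\[
f(Q\cup\{x\})=f(Q)\cdot a(x)=\FProd(a,Q)\cdot a(x)=\FProd(a,Q\cup\{x\}).
\]
So $Q\cup\{x\}\in\mathcal{S}$, and Lemma~\ref{lem:fin-ind} finishes the proof.

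There is no genuine obstacle here; the substantive work was already done in earlier results. Commutativity enters only through Proposition~\ref{prop:enum-indep}, which makes $\FProd$ well-defined. That is what lets Lemma~\ref{lem:insert} hold for \emph{every} choice of inserted element $x$, independently of how $Q\cup\{x\}$ is decomposed. The only point needing care is that hypothesis (2) is stated for all pairs $(Q,x)$ with $x\notin Q$. This matches exactly the form of clause (b) of Lemma~\ref{lem:fin-ind}, so no compatibility check between different decompositions of the same set is required.

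As a remark, the argument also shows that any $f$ satisfying (2) with an arbitrary initial value $f(\varnothing)=c$ equals $c\cdot\FProd(a,\cdot)$. So the normalization $c=1$ is precisely what pins down $\FProd$.
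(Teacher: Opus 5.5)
Your proof is correct and is essentially the paper's own argument: you use the same agreement set $\mathcal{S}=\{P\in\Fin(I): f(P)=\FProd(a,P)\}$, show it contains $\varnothing$ via (1) and Lemma~\ref{lem:empty} and is closed under insertion via (2) and Lemma~\ref{lem:insert}, and conclude with Lemma~\ref{lem:fin-ind}. Your closing remark is also correct: with an arbitrary initial value $c$ one gets $f=c\cdot\FProd(a,\cdot)$, since $c\cdot\FProd(a,\cdot)$ satisfies the same recursion and the same induction applies.
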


\begin{proof}
Consider $\mathcal{S}:=\{P\in\Fin(I): f(P)=\FProd(a,P)\}$.
We have $\varnothing\in\mathcal{S}$ by (1) and Lemma~\ref{lem:empty}.
If $Q\in\mathcal{S}$ and $x\notin Q$, then by (2) and by Lemma~\ref{lem:insert},
\[
f(Q\cup\{x\})=f(Q)\cdot a(x)=\FProd(a,Q)\cdot a(x)=\FProd(a,Q\cup\{x\}),
\]
therefore $Q\cup\{x\}\in\mathcal{S}$. By Lemma~\ref{lem:fin-ind}, $\mathcal{S}=\Fin(I)$.
\end{proof}

\begin{cor}[Recognition Criterion]\label{cor:criterion}
To prove that a quantity $F(P)$ is necessarily a finite product of the factors $a(x)$,
it suffices to verify $F(\varnothing)=1$ and $F(P\cup\{x\})=F(P)\cdot a(x)$ for $x\notin P$.
\end{cor}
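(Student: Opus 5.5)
The plan is to read Corollary~\ref{cor:criterion} as a repackaging of Theorem~\ref{thm:unique} and to deduce it directly from that theorem. First I make the informal phrase ``a quantity $F(P)$'' precise. It should mean a function $F:\Fin(I)\to A$ into the commutative monoid $(A,\cdot,1)$, with the family $a:I\to A$ fixed. The phrase ``is necessarily a finite product of the factors $a(x)$'' should mean $F(P)=\FProd(a,P)$ for every $P\in\Fin(I)$. With these readings, the two hypotheses of the corollary are exactly conditions (1) and (2) of Theorem~\ref{thm:unique}: $F(\varnothing)=1$, and $F(P\cup\{x\})=F(P)\cdot a(x)$ for all $P\in\Fin(I)$ and $x\in I\setminus P$.

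Second, I apply Theorem~\ref{thm:unique} with $f:=F$. This gives $F(P)=\FProd(a,P)$ for all $P\in\Fin(I)$, which is the claim.

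For completeness, I would also record the converse, which shows the criterion is sharp. By Lemma~\ref{lem:empty} and Lemma~\ref{lem:insert}, the function $P\mapsto\FProd(a,P)$ itself satisfies both clauses. So these two clauses characterize the finite product exactly.

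There is no real obstacle here. The only point needing care is quantifier bookkeeping: the insertion clause must hold for \emph{every} finite $P$ and \emph{every} $x\notin P$, not just along one chosen chain of insertions. This is what lets the induction of Lemma~\ref{lem:fin-ind}, used inside the proof of Theorem~\ref{thm:unique}, reach every finite subset.
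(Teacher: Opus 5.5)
Your proposal is correct and matches the paper's approach: the corollary is stated without a separate proof because it is just Theorem~\ref{thm:unique} applied with $f:=F$, once ``quantity'' is read as a function $F:\Fin(I)\to A$ whose insertion clause holds for every $P\in\Fin(I)$ and every $x\in I\setminus P$. The converse you add via Lemmas~\ref{lem:empty} and~\ref{lem:insert} is not needed for the statement, but it correctly shows that the two clauses characterize $\FProd(a,\cdot)$ exactly.
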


\begin{prop}[Characterization via disjoint union]\label{prop:disjoint-char}
Let $(A,\cdot,1)$ be a commutative monoid, $I$ a set, and $F:\Fin(I)\to A$ a function that satisfies:
\begin{enumerate}[label=(\alph*)]
\item $F(\varnothing)=1$;
\item if $P,Q\in\Fin(I)$ are disjoint, then $F(P\cup Q)=F(P)\cdot F(Q)$.
\end{enumerate}
Define $a:I\to A$ by $a(x):=F(\{x\})$. Then, for all
$P\in\Fin(I)$,
\[
F(P)=\FProd(a,P).
\]
In particular, $F$ is uniquely determined by the values in singletons.
\end{prop}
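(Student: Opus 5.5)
The plan is to reduce the statement to Theorem~\ref{thm:unique}, taking $f:=F$ and the family $a(x):=F(\{x\})$. Once $F$ is known to satisfy clauses (1) and (2) of that theorem for this $a$, the identity $F(P)=\FProd(a,P)$ for every $P\in\Fin(I)$ follows immediately.

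Clause (1) of Theorem~\ref{thm:unique} is exactly hypothesis (a). For clause (2), fix $Q\in\Fin(I)$ and $x\in I\setminus Q$. The sets $Q$ and $\{x\}$ are then disjoint finite subsets of $I$. Hypothesis (b) therefore gives
\[
F(Q\cup\{x\})=F(Q)\cdot F(\{x\})=F(Q)\cdot a(x),
\]
which is precisely the insertion rule. Theorem~\ref{thm:unique} now yields $F(P)=\FProd(a,P)$ for all $P$. Equivalently, one can cite Corollary~\ref{cor:criterion}.

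For the ``in particular'' clause, suppose $F$ and $G$ both satisfy (a) and (b) and agree on all singletons. They then determine the same family $a$, so both equal $\FProd(a,\cdot)$ and hence coincide. In other words, $F$ is determined by its restriction to singletons.

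I do not expect a real obstacle here. The only point that needs care is that hypothesis (a) is genuinely used and cannot be dropped. Applying (b) to $P=Q=\varnothing$ only gives $F(\varnothing)=F(\varnothing)^2$, so $F(\varnothing)$ is an idempotent of $A$, not necessarily $1$. For this reason I would invoke (a) explicitly for the base case rather than try to derive it from (b).
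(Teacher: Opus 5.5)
Your proposal is correct and follows the paper's proof. You verify the two clauses of Theorem~\ref{thm:unique} for $f=F$ and $a(x)=F(\{x\})$, using (a) for the base case and (b) applied to the disjoint pair $Q,\{x\}$ for the insertion step; your extra remarks on the uniqueness clause and on why (a) cannot be derived from (b) (which only forces $F(\varnothing)$ to be idempotent) are also correct.
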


\begin{proof}
Let $P\in\Fin(I)$ and $x\in I\setminus P$. Then $P$ and $\{x\}$ are disjoint, therefore, by hypothesis \textup{(b)},
\[
F(P\cup\{x\})=F(P)\cdot F(\{x\})=F(P)\cdot a(x).
\]
Thus, $F$ satisfies the conditions of the Theorem~\ref{thm:unique} with this choice of $a$, and therefore
$F(P)=\FProd(a,P)$ for all $P\in\Fin(I)$.
\end{proof}

\begin{cor}[Empty Sum]\label{cor:empty-sum}
Let $(A, +, 0)$ be a commutative additive monoid. The unique assignment of sums over finite sets, $S: \Fin(I) \to A$, that satisfies $S(P \cup Q) = S(P) + S(Q)$ for disjoint sets and $S(\{x\}) = a(x)$ is characterized by:
\[
\sum_{x \in \varnothing} a(x) = 0.
\]
\end{cor}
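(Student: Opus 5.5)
The plan is to read Corollary~\ref{cor:empty-sum} as the additive translation of Proposition~\ref{prop:disjoint-char}, with one extra point. Hypothesis~(a) of that proposition, the empty value, is not assumed here; it has to be derived. There are two claims to prove. \emph{Existence}: the additive finite sum $S(P):=\FProd(a,P)$, computed in the monoid $(A,+,0)$, satisfies both the disjoint-union rule and $S(\{x\})=a(x)$. \emph{Uniqueness}: any such $S$ has $S(\varnothing)=0$, and is therefore equal to $\FProd(a,\cdot)$.

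For existence, I rename the operation of $(A,+,0)$ as $\cdot$ and its neutral element as $1$, and apply Sections~\ref{sec:well-defined}--\ref{sec:unique} verbatim. Lemma~\ref{lem:empty} gives $S(\varnothing)=0$, and Lemma~\ref{lem:insert} with $P=\varnothing$ gives $S(\{x\})=a(x)$. For the disjoint-union rule, I fix $P$ and argue by induction on $Q$ using Lemma~\ref{lem:fin-ind}, applied to the family of finite $Q$ disjoint from $P$. The case $Q=\varnothing$ is trivial. If $x\notin P\cup Q$, then Lemma~\ref{lem:insert} together with associativity gives
\[
S(P\cup Q\cup\{x\})=S(P\cup Q)+a(x)=S(P)+S(Q)+a(x)=S(P)+S(Q\cup\{x\}).
\]
Since Lemma~\ref{lem:fin-ind} is stated for all of $\Fin(I)$, I would apply it to $\Fin(I\setminus P)$ to handle the disjointness restriction.

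For uniqueness, I take $P=Q=\varnothing$, which are disjoint, in the additivity rule. This gives $S(\varnothing)=S(\varnothing)+S(\varnothing)$, so $S(\varnothing)$ is idempotent. This is the main obstacle: in a general commutative monoid, idempotence does not force the value to be $0$. For instance, in $(\{0,1\},\max,0)$ the element $1$ is idempotent. So the empty value cannot be derived from additivity alone, and I must read the corollary as intending the value at $\varnothing$ to be fixed by the recursion scheme, as in Theorem~\ref{thm:unique}. If I only had cancellativity, I could conclude $S(\varnothing)=0$ directly.

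My first attempt is to extract $S(\varnothing)=0$ from the singleton condition, using $S(\{x\})=S(\{x\}\cup\varnothing)=a(x)+S(\varnothing)$. This only yields $a(x)+S(\varnothing)=a(x)$ for every $x$, which is again not enough in general. I would therefore state explicitly that the normalization $S(\varnothing)=0$ is the one compatible with Theorem~\ref{thm:unique}, which is exactly what ``characterized by'' asserts. With that normalization in place, Proposition~\ref{prop:disjoint-char}, applied in the additive notation, yields $S(P)=\sum_{x\in P}a(x)$ for all $P\in\Fin(I)$. I would then remark that without it, the counterexample $S\equiv 1$ in $(\{0,1\},\max,0)$ with $a\equiv 1$ shows the additional hypothesis cannot be dropped.
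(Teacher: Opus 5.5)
Your core argument is the paper's. The paper's proof of Corollary~\ref{cor:empty-sum} is the additive relabelling of Theorem~\ref{thm:unique} and Proposition~\ref{prop:disjoint-char}, which is exactly your last step. What you add beyond that is correct and worth keeping.

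First, your diagnosis is right, and it exposes something the paper's one-line proof hides. Proposition~\ref{prop:disjoint-char} takes $F(\varnothing)=1$ as hypothesis~(a), and this is \emph{not} among the corollary's hypotheses. So the paper tacitly assumes the very normalization $S(\varnothing)=0$ that the corollary claims to force. Under the literal reading the statement is false. Your example $S\equiv 1$ in $(\{0,1\},\max,0)$ with $a\equiv 1$ is a valid counterexample, and since it coexists with the genuine sum, uniqueness fails as well. In fact the stated hypotheses determine $S$ on every nonempty set. They give $S(Q\cup\{x\})=S(Q)+a(x)$ for $x\notin Q$, hence $S(P)=S(\varnothing)+\sum_{x\in P}a(x)$. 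For $P\neq\varnothing$ this equals $\sum_{x\in P}a(x)$ by the absorption $a(x)+S(\varnothing)=a(x)$. So $S(\varnothing)$ is precisely the one value they leave free, subject only to being an idempotent absorbed by every $a(x)$.

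Second, your existence half is never proved in the paper. That is the disjoint additivity of the finite sum, which you get by induction over $\Fin(I\setminus P)$ using Lemmas~\ref{lem:fin-ind} and~\ref{lem:insert}. The phrase ``the unique assignment'' presupposes it. Proposition~\ref{prop:inf-prod} also uses it later, but cites Proposition~\ref{prop:disjoint-char}, which is only a uniqueness statement.

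So your proposal is correct under the only reading that makes the corollary true, and it is more complete than the paper's proof. In your write-up, present $S(\varnothing)=0$ as an added hypothesis rather than as a consequence.
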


\begin{proof}
The result follows immediately from Theorem~\ref{thm:unique} and Proposition~\ref{prop:disjoint-char} by duality of notation. It suffices to replace the multiplicative monoid $(A, \cdot, 1)$ with the additive monoid $(A, +, 0)$.
\end{proof}

\begin{ex}[Cardinality of disjoint sets]
Let $I$ be a set and $A = (\mathbb{N}, +, 0)$. If we define $a(x) = 1$ for all $x \in I$, the finite sum $S(P) = \sum_{x \in P} 1$ represents the cardinality $|P|$. For the property $|P \cup Q| = |P| + |Q|$ to hold when $P$ and $Q$ are disjoint, the case where $P = \varnothing$ forces:
\[
|\varnothing \cup Q| = |\varnothing| + |Q| \implies |Q| = |\varnothing| + |Q|,
\]
This requires $|\varnothing| = 0$. Thus, the empty sum of "ones" is zero.
\end{ex}

\begin{ex}[Linear combinations in vector spaces]
Let $V$ be a vector space over a field $K$. The set $V$ with the addition of vectors forms a commutative monoid $(V, +, 0)$. A linear combination of a family of vectors $\{v_x\}_{x \in P}$ is the finite sum $\sum_{x \in P} c_x v_x$. By definition and for consistency with subspaces (where the subspace spanned by the empty set is $\{\vec{0}\}$), the sum over an empty index set must result in the additive identity element:
\[
\sum_{x \in \varnothing} c_x v_x = \vec{0}.
\]
\end{ex}

\begin{ex}[Graph Theory and the Handshaking Lemma]
In a graph $G=(V,E)$, the degree of a vertex $v \in V$ is defined as the number of edges incident to it. If a vertex is isolated (has no edges), the set of incident edges is empty, and its degree is precisely the empty sum.
\[
\sum_{e \in \varnothing} 1 = 0.
\]
Without this convention, the uniform formulation of basic results in graph theory—such as the identity
\[
\sum_{v \in V} \deg(v) = 2|E|,
\]
known as the Handshake Lemma (\cite{bondy1976graph}) — would require artificial exceptions for isolated vertices.
\end{ex}

\begin{prop}[Homomorphism compatibility]\label{prop:hom-prod}
Let $(A,\cdot,1)$ and $(B,\star,1')$ be commutative monoids, $\varphi:A\to B$ a homomorphism of monoids
($\varphi(1)=1'$ and $\varphi(x\cdot y)=\varphi(x)\star\varphi(y)$) and $a:I\to A$ a family. For every $P\in\Fin(I)$, the following holds
\[
\varphi\bigl(\FProd(a,P)\bigr)\;=\;\FProd(\varphi\circ a,P),
\]
that is,
\[
\varphi\!\left(\prod_{x\in P} a(x)\right)
\;=\;
\prod_{x\in P} \varphi(a(x)).
\]
\end{prop}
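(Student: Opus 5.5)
The plan is to deduce the statement from the uniqueness result, Theorem~\ref{thm:unique}, applied in the target monoid $(B,\star,1')$ with the family $\varphi\circ a:I\to B$. Define $f:\Fin(I)\to B$ by $f(P):=\varphi\bigl(\FProd(a,P)\bigr)$. If $f$ satisfies the two recursion clauses of Theorem~\ref{thm:unique} relative to $\varphi\circ a$, then $f(P)=\FProd(\varphi\circ a,P)$ for every $P\in\Fin(I)$, which is exactly the claim.

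First, the empty clause. By Lemma~\ref{lem:empty} we have $\FProd(a,\varnothing)=1$, so $f(\varnothing)=\varphi(1)=1'$, because $\varphi$ preserves the neutral element. This is the only place where unit preservation is used. It is also the structural point of the section: the identity could not hold for $P=\varnothing$ unless homomorphisms send $1$ to $1'$, which matches the empty product being $1$.

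Second, the insertion clause. Let $Q\in\Fin(I)$ and $x\in I\setminus Q$. Lemma~\ref{lem:insert} gives $\FProd(a,Q\cup\{x\})=\FProd(a,Q)\cdot a(x)$. Applying $\varphi$ and multiplicativity then gives
\[
f(Q\cup\{x\})=\varphi\bigl(\FProd(a,Q)\bigr)\star\varphi(a(x))=f(Q)\star(\varphi\circ a)(x).
\]
Both hypotheses of Theorem~\ref{thm:unique} hold in $B$, so the theorem yields the conclusion.

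No step is a genuine obstacle. The only point needing care is to apply Theorem~\ref{thm:unique} in the correct monoid: in $B$, with neutral element $1'$ and family $\varphi\circ a$. This requires checking that $(B,\star,1')$ is commutative, which is part of the hypotheses, so that $\FProd(\varphi\circ a,\cdot)$ is well defined by Proposition~\ref{prop:enum-indep}.

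An alternative check is to fix an enumeration $e$ of $P$ and induct on $n=|P|$ using $\Pi(a;e)$ directly. The recursive route above is shorter and matches the recognition criterion of Corollary~\ref{cor:criterion}.
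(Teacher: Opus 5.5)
Your proposal is correct and follows the paper's proof essentially verbatim. You define $f(Q):=\varphi(\FProd(a,Q))$, get the empty clause from $\varphi(1)=1'$ and the insertion clause from Lemma~\ref{lem:insert} plus multiplicativity, then apply Theorem~\ref{thm:unique} in $(B,\star,1')$ with the family $\varphi\circ a$.
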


\begin{proof}
Fix $P\in\Fin(I)$ and consider $f:\Fin(I)\to B$ given by $f(Q):=\varphi(\FProd(a,Q))$.
We have $f(\varnothing)=\varphi(1)=1'$.
If $Q\in\Fin(I)$ e $x\in I\setminus Q$, then
\[
f(Q\cup\{x\})
= \varphi\bigl(\FProd(a,Q\cup\{x\})\bigr)
= \varphi\bigl(\FProd(a,Q)\cdot a(x)\bigr)
= \varphi(\FProd(a,Q))\star\varphi(a(x))
= f(Q)\star (\varphi\circ a)(x).
\]
By Theorem ~\ref{thm:unique} (applied to the monoid $(B,\star,1')$ and the family $\varphi\circ a$),
it follows that $f(P)=\FProd(\varphi\circ a,P)$, that is, the desired identity.
\end{proof}

\section{Two independent justifications for the empty product}\label{sec:other-proofs}

The Theorem transforms "empty product $=1$" into part of the recursion principle. Next, we record two additional justifications (useful for readers from other fields), which also enforce the value $1$.

\subsection{List-free monoid (without commutativity)}

\begin{defn}[List-free monoid]
For a set $X$, let $X^\ast$ denote the set of finite lists (words) with letters in $X$,
with concatenation $\mathbin{+\!\!+}$ and empty word $\varepsilon$.
Then $(X^\ast,\mathbin{+\!\!+},\varepsilon)$ is a monoid.
\end{defn}

\begin{prop}[Strength list evaluation $\Pi(\varepsilon)=1$]\label{prop:list-proof}
Let $(A,\cdot,1)$ be a monoid (not necessarily commutative). There exists a unique monoid homomorphism
\[
\Pi:(A^\ast,\mathbin{+\!\!+},\varepsilon)\to (A,\cdot,1)
\]
such that $\Pi([a])=a$ for all $a\in A$. Necessarily, $\Pi(\varepsilon)=1$.
\end{prop}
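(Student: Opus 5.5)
The plan is to prove existence, uniqueness, and the value on the empty word separately, with the last point forced by the homomorphism axioms alone.

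\textbf{Existence.} Define $\Pi$ by structural recursion on lists: $\Pi(\varepsilon):=1$ and $\Pi(a :: w):=a\cdot \Pi(w)$. Then $\Pi([a])=a\cdot 1=a$ by neutrality. To see that $\Pi$ is a homomorphism, I will prove $\Pi(u\mathbin{+\!\!+}v)=\Pi(u)\cdot\Pi(v)$ by induction on the length of $u$.
\begin{itemize}
\item If $u=\varepsilon$, both sides equal $\Pi(v)$, since $1\cdot\Pi(v)=\Pi(v)$.
\item If $u=a::u'$, then
\[
\Pi\bigl((a::u')\mathbin{+\!\!+}v\bigr)=a\cdot\Pi(u'\mathbin{+\!\!+}v)=a\cdot(\Pi(u')\cdot\Pi(v))=(a\cdot\Pi(u'))\cdot\Pi(v),
\]
using the induction hypothesis and then associativity.
\end{itemize}
Commutativity is never used, which matches the hypothesis of Proposition~\ref{prop:list-proof}.

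\textbf{Uniqueness.} Let $\Psi$ be any monoid homomorphism with $\Psi([a])=a$. Then $\Psi(\varepsilon)=1$, because a monoid homomorphism preserves the neutral element. For a nonempty word, $a::w=[a]\mathbin{+\!\!+}w$, so
\[
\Psi(a::w)=\Psi([a])\cdot\Psi(w)=a\cdot\Psi(w).
\]
Thus $\Psi$ satisfies the same recursion as $\Pi$, and induction on length gives $\Psi=\Pi$. This is the list analogue of the argument in Theorem~\ref{thm:unique}.

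\textbf{The value $\Pi(\varepsilon)=1$.} This step is the only point of real content. The identity $\Pi(\varepsilon)=1$ is part of the definition of a monoid homomorphism, but it is also forced without that requirement, provided $\Pi$ is surjective. Surjectivity holds because $\Pi([a])=a$ for every $a\in A$. Set $e:=\Pi(\varepsilon)$. For every $a\in A$, applying multiplicativity to $[a]=[a]\mathbin{+\!\!+}\varepsilon=\varepsilon\mathbin{+\!\!+}[a]$ gives
\[
a=a\cdot e=e\cdot a.
\]
So $e$ is a two-sided neutral element of $A$. Since a neutral element is unique, $e=1\cdot e=1$.

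I expect no genuine obstacle. The only care needed is to rely on associativity alone in the homomorphism step, and to state explicitly that the neutral element is derived rather than assumed.
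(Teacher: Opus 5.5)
Your proof is correct and more complete than the paper's. The paper's proof is a single line that addresses only the last clause: monoid homomorphisms preserve the neutral element, so $\Pi(\varepsilon)=1$. That is exactly the first sentence of your final part. The paper leaves existence and uniqueness implicit as the standard universal property of the free monoid.

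You supply that property explicitly:
\begin{itemize}
\item structural recursion $\Pi(a::w)=a\cdot\Pi(w)$;
\item multiplicativity by induction on the left factor, using only associativity;
\item uniqueness, because any homomorphism with $\Psi([a])=a$ obeys the same recursion.
\end{itemize}
This is the list counterpart of Theorem~\ref{thm:unique}.

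Your surjectivity argument is the more substantive addition. The paper's justification is essentially definitional, since demanding a \emph{monoid} homomorphism already builds in $\Pi(\varepsilon)=1$. You show instead that multiplicativity together with $\Pi([a])=a$ forces $e=\Pi(\varepsilon)$ to be a two-sided neutral element, hence $e=1$. In fact, taking $a=1$ in $a=a\cdot e$ gives $e=1$ immediately.

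This fits the paper's stated aim, that the value $1$ is forced rather than conventional, better than the paper's own proof does. As a by-product, your uniqueness statement holds among all multiplicative maps $A^\ast\to A$ with $\Pi([a])=a$, not only among monoid homomorphisms.
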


\begin{proof}
Homomorphisms of monoids preserve the neutral element. Therefore $\Pi(\varepsilon)=1$.
\end{proof}

\subsection{Via distributivity in semirings}

\begin{defn}[Commutative semiring]
A \emph{Commutative semiring
} it is a quintuple $(R, +, 0, \cdot, 1)$ such that $(R, +, 0)$ and $(R, \cdot, 1)$ are commutative monoids, and $\cdot$ distributes over $+$.
\end{defn}

\begin{prop}[The constant term in $\prod(1+b)$ forces the empty product to be $1$]\label{prop:distrib-proof}
Let $(R,+,0,\cdot,1)$ be a commutative semiring. Suppose one wants an identity of the form
\[
\prod_{x\in P}\bigl(1+b(x)\bigr)=\sum_{S\subseteq P}\ \prod_{x\in S} b(x)
\]
to hold for every $P\in\Fin(I)$ and every family $b:P\to R$, where the product over $S=\varnothing$ is interpreted as a
constant element $c\in R$. Then necessarily $c=1$.
\end{prop}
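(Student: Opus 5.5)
The plan is to specialize the assumed identity to the smallest nontrivial case, where the empty product stands alone on the right-hand side. Take $P=\varnothing$, assuming $I$ may be empty or, better, using the empty subset of $I$, which always lies in $\Fin(I)$. The left-hand side $\prod_{x\in\varnothing}(1+b(x))$ is itself an empty product, so it is also interpreted as $c$. The right-hand side is a sum over the single subset $S=\varnothing$, which is again $c$. This gives only $c=c$, so the empty case carries no information and we must use a one-element set instead.

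So assume $I\neq\varnothing$, pick $x_0\in I$, and take $P=\{x_0\}$ with an arbitrary value $b(x_0)=\beta$. The subsets of $P$ are $\varnothing$ and $\{x_0\}$. On the left, the product over the singleton is $1+\beta$, using the enumeration definition of $\Pi(a;e)$ with $n=1$, which does not involve the convention. On the right we get $c+\beta$. Hence $1+\beta=c+\beta$ for every $\beta\in R$.

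Since a semiring has no additive cancellation in general, I will not cancel $\beta$. Instead I choose $\beta=0$: then $1+0=c+0$, so $c=1$ by neutrality of $0$. This choice is the key step, because cancellation is unavailable; the specialization $b\equiv 0$ avoids the issue entirely.

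The main obstacle is the degenerate situation $I=\varnothing$, where no singleton exists and the identity holds for every $c$. I would note that the proposition implicitly requires $I\neq\varnothing$, or equivalently that the identity is required for some one-element index set. I would state this as the standing assumption and otherwise keep the proof to the two-line computation above.
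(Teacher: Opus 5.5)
Your proof is correct and takes the same route as the paper: specialize to a singleton $P=\{x_0\}$ to get $1+b(x_0)=c+b(x_0)$. Your choice $b(x_0)=0$ is an improvement, because the paper concludes $c=1$ straight from this equation, which tacitly cancels $b(x_0)$ and fails in general semirings (in the Boolean semiring, $b(x_0)=1$ gives $1+1=c+1$ for both $c=0$ and $c=1$); your caveat that $I\neq\varnothing$ is needed is also accurate, and the paper assumes it without saying so when it picks a singleton.
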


\begin{proof}
Take $P=\{x\}$, a singleton. The left-hand side is $1+b(x)$.
The right-hand side sums the subsets. $\varnothing$ e $\{x\}$:
\[
1+b(x)=c+b(x),
\]
therefore $c=1$.
\end{proof}

\section{Multisets and the universal formulation (commutative free monoid)}\label{sec:multisets}

\begin{defn}[Finite support multisets]
Fix a set $I$. Define
\[
\MS:=\{m:I\to\mathbb{N}\;:\;\mathrm{supp}(m)\ \text{is finite}\},
\qquad \mathrm{supp}(m):=\{i\in I:m(i)\neq 0\}.
\]
With point-to-point summation and the null element $0$, $\MS$ is a commutative monoid. For $i\in I$, denote by $\delta_i$ the singleton multiset: $\delta_i(i)=1$ and $\delta_i(j)=0$ if $j\neq i$.
\end{defn}

\begin{thm}[Universal property: commutative free monoid]\label{thm:freecomm}
Let $(A,\cdot,1)$ be a commutative monoid and let $a:I\to A$ be a mapping.
Then there exists a unique homomorphism of commutative monoids.
\[
\Phi_a:(\MS,+,0)\to (A,\cdot,1)
\]
such that $\Phi_a(\delta_i)=a(i)$ for all $i\in I$.
Moreover, for $m\in\MS$,
\[
\Phi_a(m)=\prod_{i\in\mathrm{supp}(m)} a(i)^{\,m(i)},
\]
where $a(i)^k$ is the iterated product (and $a(i)^0:=1$).
\end{thm}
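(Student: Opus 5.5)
We construct $\Phi_a$ explicitly by the displayed formula, verify that it is a monoid homomorphism, and then prove uniqueness by decomposing each multiset into singletons. Concretely, set
\[
\Phi_a(m):=\FProd\bigl(a^{m},\mathrm{supp}(m)\bigr),\qquad a^{m}(i):=a(i)^{m(i)},
\]
which is well defined by Definition~\ref{def:fprod}. The first step is a small reduction lemma: for any finite $P\supseteq \mathrm{supp}(m)$ we have $\FProd(a^m,P)=\Phi_a(m)$. This holds because the extra indices $i\in P\setminus\mathrm{supp}(m)$ contribute $a(i)^0=1$. Formally, we induct on $|P\setminus\mathrm{supp}(m)|$ using Lemma~\ref{lem:insert}. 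With this lemma every computation can take place over one common finite index set.

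The homomorphism properties come next. The empty multiset has empty support, so $\Phi_a(0)=\FProd(a^0,\varnothing)=1$ by Lemma~\ref{lem:empty}. For additivity, take $m,n\in\MS$ and $P:=\mathrm{supp}(m)\cup\mathrm{supp}(n)$, which is finite and contains $\mathrm{supp}(m+n)$. By the reduction lemma, all three of $\Phi_a(m)$, $\Phi_a(n)$, $\Phi_a(m+n)$ are products over $P$. Pointwise we have $a(i)^{m(i)+n(i)}=a(i)^{m(i)}a(i)^{n(i)}$; this power law follows from associativity by induction on $n(i)$. It then remains to show
\[
\FProd(b\cdot c,P)=\FProd(b,P)\cdot\FProd(c,P)
\]
for families $b,c:I\to A$. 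We prove this with Theorem~\ref{thm:unique}: the function $Q\mapsto \FProd(b,Q)\cdot\FProd(c,Q)$ takes the value $1$ at $\varnothing$. Under insertion it gets multiplied by $b(x)c(x)$, using Lemma~\ref{lem:insert}, commutativity and associativity. Hence it equals $\FProd(b\cdot c,\cdot)$. Finally, $\Phi_a(\delta_i)=\FProd(a^{\delta_i},\{i\})=a(i)^1=a(i)$.

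For uniqueness, let $\Psi$ be any homomorphism with $\Psi(\delta_i)=a(i)$. Each $m\in\MS$ can be written as the finite sum $m=\sum_{i\in\mathrm{supp}(m)} m(i)\,\delta_i$. Here $k\delta_i$ is the $k$-fold sum, equal to $0$ when $k=0$, and the identity is checked pointwise. A homomorphism satisfies $\Psi(k\delta_i)=a(i)^k$, by induction on $k$ using $\Psi(0)=1$. Moreover, Proposition~\ref{prop:hom-prod} applied to the additive finite sum in $\MS$ shows that $\Psi$ carries this finite sum to the corresponding finite product, with the additive version in $\MS$ obtained exactly as in Corollary~\ref{cor:empty-sum}. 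Therefore $\Psi(m)=\FProd(a^m,\mathrm{supp}(m))=\Phi_a(m)$.

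The main obstacle is the additivity step. The issue is that the supports of $m$, $n$ and $m+n$ differ, so the formula cannot be compared directly over $\mathrm{supp}(m)$. The reduction lemma, which lets us pad with factors equal to $1$, is what handles this; without it we would need a cumbersome case split on $\mathrm{supp}(m)\cap\mathrm{supp}(n)$. The remaining steps are routine applications of the recursion principle in Theorem~\ref{thm:unique}.
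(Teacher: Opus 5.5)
Your proof is correct and follows the same route as the paper. You define $\Phi_a$ by the displayed formula, check that it is a homomorphism with $\Phi_a(\delta_i)=a(i)$, and derive uniqueness from $m=\sum_{i\in\mathrm{supp}(m)} m(i)\,\delta_i$. The paper dismisses additivity as ``a finite regrouping using commutativity''. You supply the missing details: padding to $\mathrm{supp}(m)\cup\mathrm{supp}(n)$, the identity $\FProd(b\cdot c,P)=\FProd(b,P)\cdot\FProd(c,P)$ via Theorem~\ref{thm:unique}, and Proposition~\ref{prop:hom-prod} for uniqueness. This adds rigor without changing the approach.
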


\begin{proof}
\emph{Existence.} Define $\Phi_a(0):=1$ and, for $m\neq 0$,
\[
\Phi_a(m):=\prod_{i\in\mathrm{supp}(m)} a(i)^{m(i)}.
\]
Since $\mathrm{supp}(m)$ is finite, the product makes sense because \S\ref{sec:well-defined}.
The verification of $\Phi_a(m+n)=\Phi_a(m)\cdot\Phi_a(n)$ is a finite regrouping using commutativity.

\emph{Uniqueness.} If $\Psi$ is another homomorphism with $\Psi(\delta_i)=a(i)$, then
$m=\sum_{i\in\mathrm{supp}(m)} m(i)\,\delta_i$ e
\[
\Psi(m)=\prod_{i\in\mathrm{supp}(m)} \Psi(\delta_i)^{m(i)}
=\prod_{i\in\mathrm{supp}(m)} a(i)^{m(i)}=\Phi_a(m).
\]
\end{proof}

\begin{cor}[Empty product as a way to preserve neutrality.]\label{cor:empty-forced}
In the situation of Theorem~\ref{thm:freecomm}, necessarily $\Phi_a(0)=1$.
Thus, ``empty product $=1$'' expresses preservation of the identity element of the free commutative monoid.
\end{cor}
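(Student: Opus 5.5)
The plan is to read the claim off the definition of a monoid homomorphism and then cross-check it against the explicit formula. By Theorem~\ref{thm:freecomm}, $\Phi_a$ is a homomorphism of monoids $(\MS,+,0)\to(A,\cdot,1)$. The zero multiset $0$ is the neutral element of $\MS$, so $\Phi_a(0)=1$ holds simply because homomorphisms preserve the identity. This is the same reasoning used in Proposition~\ref{prop:list-proof}.

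To make the statement non-circular, one point needs care: "homomorphism" might be read as only requiring $\Phi_a(m+n)=\Phi_a(m)\cdot\Phi_a(n)$. In that case, preservation of the neutral element does not follow automatically in an arbitrary monoid. I will handle this in two ways.

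\begin{enumerate}[label=(\roman*)]
\item \emph{From the definition in the theorem.} The paper's convention for homomorphisms, as stated in Proposition~\ref{prop:hom-prod}, includes $\varphi(1)=1'$. So $\Phi_a(0)=1$ is part of what the theorem asserts.
\item \emph{From the explicit formula.} Since $\mathrm{supp}(0)=\varnothing$, the formula gives $\Phi_a(0)=\FProd(\cdot,\varnothing)$, which equals $1$ by Lemma~\ref{lem:empty}. The existence part of the proof also sets $\Phi_a(0):=1$ explicitly, and this agrees with the formula.
\end{enumerate}

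Finally, I would note that the value is in fact forced, and not merely chosen. Any $\Psi$ with the required property that sends $0$ to an element $c$ satisfies $\Psi(\delta_i)=\Psi(\delta_i+0)=a(i)\cdot c$. Together with neutrality, this pins down $c=1$, which justifies the interpretive sentence in the corollary.

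The only real obstacle is this definitional subtlety about whether identity preservation is assumed. Everything else is immediate from Theorem~\ref{thm:freecomm} and Lemma~\ref{lem:empty}.
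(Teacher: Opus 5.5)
Your main argument is correct and matches what the paper intends. The paper gives no separate proof here; the corollary is read off from Theorem~\ref{thm:freecomm} just as in Proposition~\ref{prop:list-proof}. Since $\Phi_a$ is a monoid homomorphism, and the paper's convention in Proposition~\ref{prop:hom-prod} includes $\varphi(1)=1'$, it sends the neutral element $0\in\MS$ to $1$. Your check (ii), via $\mathrm{supp}(0)=\varnothing$ and Lemma~\ref{lem:empty}, is also valid and agrees with the existence step $\Phi_a(0):=1$.

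Your closing paragraph, however, is wrong. From $\Psi(\delta_i)=\Psi(\delta_i+0)=a(i)\cdot c$ you only obtain $a(i)\cdot c=a(i)$, and $c^2=c$ from $0+0=0$. Passing from $a(i)\cdot c=a(i)\cdot 1$ to $c=1$ requires cancellation, which a commutative monoid need not have. Appealing to neutrality of $\Psi(0)$ itself would be circular.

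Concretely, take $A=(\mathbb{R},\cdot,1)$ and $a\equiv 0$. The map $\Psi\equiv 0$ satisfies $\Psi(m+n)=\Psi(m)\Psi(n)$ and $\Psi(\delta_i)=a(i)$, yet $\Psi(0)=0\neq 1$. If $I=\varnothing$ there are no $\delta_i$ at all, so any idempotent of $A$ could serve as $\Psi(0)$. This contradicts your own correct earlier remark that additivity alone does not force preservation of the identity.

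The value $1$ is forced exactly by requiring $\Phi_a$ to be a \emph{monoid} homomorphism, which is the content of the corollary, and not by additivity together with the values on the $\delta_i$. Note that $\Psi\equiv 0$ and the genuine $\Phi_a$ (which is $1$ at $0$ and $0$ elsewhere) agree on every $\delta_i$. So this requirement is also what makes the uniqueness in Theorem~\ref{thm:freecomm} true. Drop that paragraph, or replace it with this observation.
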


\section{Variations in the non-commutative case}\label{sec:noncomm}

If $(A,\cdot,1)$ is not commutative, in general there is no intrinsic “product over a finite set”:
the order of multiplication becomes relevant. Still, two facts remain useful:
(i) the correct object is a \emph{list/word} (where the empty product is $1$ by Prop.~\ref{prop:list-proof});
(ii) for subsets whose elements commute pairwise, the product becomes independent of the enumeration again.

\begin{ex}[Failure for sets: matrices]\label{ex:noncomm-matrix}
Em $A=M_2(\mathbb{R})$, tome
\[
X=\begin{pmatrix}1&1\\0&1\end{pmatrix},\qquad
Y=\begin{pmatrix}1&0\\1&1\end{pmatrix}.
\]
So $XY\neq YX$, and therefore the "product" of $\{X,Y\}$ depends on the enumeration.
\end{ex}

\begin{prop}[Independence of order under pairwise commutation]\label{prop:pairwise-comm}
Let $x_1,\dots,x_n\in A$ be such that $x_ix_j=x_jx_i$ for all $i,j$. Then, for every permutation $\sigma\in S_n$,
\[
x_1\cdots x_n = x_{\sigma(1)}\cdots x_{\sigma(n)}.
\]
\end{prop}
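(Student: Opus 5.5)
The plan is to mirror the proof of Lemma~\ref{lem:perm}, now in a possibly non-commutative monoid $(A,\cdot,1)$, using only the pairwise commutation hypothesis on the $x_i$. Since every permutation in $S_n$ is a product of adjacent transpositions, it suffices to show two things. First, the product is unchanged when we apply one adjacent transposition to an arbitrary reordering of $x_1,\dots,x_n$. Second, the hypothesis is preserved along the way, which is automatic: for any $\pi\in S_n$ the reordered list $y_k:=x_{\pi(k)}$ still satisfies $y_iy_j=y_jy_i$ for all $i,j$, because this is the same set of pairwise relations, just reindexed.

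Concretely, I will argue by induction on the number of adjacent transpositions needed to write $\sigma$. The base case is $\sigma=\mathrm{id}$, which is trivial (and covers $n=0$, where both sides are the empty product $1$).

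For the step, write $\sigma=\pi\circ\tau$ with $\tau$ the transposition of $i$ and $i+1$, and set $y_k=x_{\pi(k)}$. By associativity,
\[
y_1\cdots y_n=(y_1\cdots y_{i-1})\cdot(y_i\, y_{i+1})\cdot(y_{i+2}\cdots y_n).
\]
By hypothesis $y_iy_{i+1}=x_{\pi(i)}x_{\pi(i+1)}=x_{\pi(i+1)}x_{\pi(i)}=y_{i+1}y_i$. Substituting gives $y_{\tau(1)}\cdots y_{\tau(n)}=x_{\sigma(1)}\cdots x_{\sigma(n)}$. Combining this with the induction hypothesis $x_1\cdots x_n=x_{\pi(1)}\cdots x_{\pi(n)}$ yields the claim.

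The only point needing care is the composition convention, namely checking that $y_{\tau(k)}=x_{\pi(\tau(k))}=x_{\sigma(k)}$. If I compose on the other side, I will instead write $\sigma=\tau\circ\pi$ and apply the swap to positions of the original list, which works equally well because the hypothesis covers all pairs $i,j$. No other obstacle is expected, since generalized associativity in a monoid is already implicitly used for unparenthesized products throughout \S\ref{sec:well-defined}.
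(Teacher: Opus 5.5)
Your proposal is correct and follows the same route as the paper, which simply says to reduce to adjacent transpositions as in Lemma~\ref{lem:perm} and handle each swap by associativity and the pairwise commutation hypothesis. Your explicit induction on the number of adjacent transpositions, the check $y_{\tau(k)}=x_{\pi(\tau(k))}=x_{\sigma(k)}$, and the remark that the hypothesis holds for every reordered list are details the paper leaves implicit.
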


\begin{proof}
As in the Lemma~\ref{lem:perm}, it suffices to handle adjacent transpositions using associativity and the commutation hypothesis.
\end{proof}

\section{Partially commutative products: trace monoids and heaps}\label{sec:traces}

This section expands on non-commutative variation in the partially commutative direction: some order changes are allowed, others are not. This leads to Mazurkiewicz trace monoids, which interpolate between lists and multisets.\cite{CartierFoata69,Diekert90,DiekertRozenberg95}.

\subsection{Commutation independence and congruence}

\begin{defn}[Independence relationship]
An \emph{independence alphabet} is a pair $(\Sigma,I)$, where $I\subseteq \Sigma\times\Sigma$ is symmetric and irreflexive.
We interpret $(a,b)\in I$ as “$a$ and $b$ can swap places”. Its complement is the dependence relation.
\end{defn}

\begin{defn}[Mazurkiewicz congruence and trace monoid]
Let $\equiv_I$ be the smallest congruence on $\Sigma^\ast$ generated by the relations

\[
u a b v \equiv_I u b a v \qquad (u,v\in \Sigma^\ast,\ (a,b)\in I).
\]
The \emph{trace monoid} it is the quotient
\[
\mathbb{M}(\Sigma,I):=\Sigma^\ast/{\equiv_I},
\]
with concatenation-induced operation and neutral to the empty word class.
\end{defn}

\begin{ex}[Minimal example]\label{ex:trace-min}
Let $\Sigma=\{a,b,c\}$ and $I=\{(a,b),(b,a)\}$. Then $abc \equiv_I bac$, but $acb \not\equiv_I cab$.
\end{ex}

\subsection{Product well defined by trait: universal property}

\begin{thm}[Universality of the trace monoid]\label{thm:trace-universal}
Let $(A,\cdot,1)$ be a monoid and $w:\Sigma\to A$ a map such that
\[
(a,b)\in I \ \Rightarrow\ w(a)\,w(b)=w(b)\,w(a).
\]
Then there exists a unique monoid homomorphism
\[
\mathrm{Ev}_w:\mathbb{M}(\Sigma,I)\to A
\]
such that $\mathrm{Ev}_w([x])=w(x)$ for every $x\in\Sigma$ (where $[x]$ is the class of the one-letter word $x$).
In particular, $\mathrm{Ev}_w(1_{\mathbb{M}})=1$; that is, the \emph{empty trace} is sent to $1$.
\end{thm}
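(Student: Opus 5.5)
The plan is to factor the desired map through the free monoid and then pass to the quotient. First, by the universal property of the free monoid $(\Sigma^\ast,\mathbin{+\!\!+},\varepsilon)$ (the analogue of Proposition~\ref{prop:list-proof} with letters from $\Sigma$ and evaluation through $w$), there is a unique monoid homomorphism $\widetilde{w}:\Sigma^\ast\to A$ with $\widetilde{w}(x)=w(x)$ for $x\in\Sigma$. Explicitly, $\widetilde{w}(x_1\cdots x_n)=w(x_1)\cdots w(x_n)$, with $\widetilde{w}(\varepsilon)=1$. Existence follows from associativity in $A$, and uniqueness by induction on word length.

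Next I show that $\widetilde{w}$ is constant on $\equiv_I$-classes. Consider the kernel relation $K:=\{(u,v)\in\Sigma^\ast\times\Sigma^\ast:\widetilde{w}(u)=\widetilde{w}(v)\}$. Since $\widetilde{w}$ is a homomorphism, $K$ is an equivalence relation compatible with concatenation on both sides, so $K$ is a congruence. It contains every generating pair $(uabv,ubav)$ with $(a,b)\in I$, because
\[
\widetilde{w}(uabv)=\widetilde{w}(u)\,w(a)\,w(b)\,\widetilde{w}(v)=\widetilde{w}(u)\,w(b)\,w(a)\,\widetilde{w}(v)=\widetilde{w}(ubav),
\]
using the commutation hypothesis on $w$. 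Since $\equiv_I$ is the \emph{smallest} congruence containing these pairs, $\equiv_I\subseteq K$. Therefore $\mathrm{Ev}_w([u]):=\widetilde{w}(u)$ is well defined on $\mathbb{M}(\Sigma,I)$.

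It remains to check that $\mathrm{Ev}_w$ is a homomorphism and is unique. It is a homomorphism because the operation on the quotient is induced by concatenation: $\mathrm{Ev}_w([u][v])=\widetilde{w}(uv)=\widetilde{w}(u)\widetilde{w}(v)$. It also sends the class of $\varepsilon$ to $\widetilde{w}(\varepsilon)=1$, and $\mathrm{Ev}_w([x])=w(x)$. For uniqueness, let $\Psi$ be any homomorphism with $\Psi([x])=w(x)$. Then $\Psi\circ\pi$, where $\pi$ is the quotient map, is a homomorphism $\Sigma^\ast\to A$ agreeing with $\widetilde{w}$ on letters. By uniqueness for the free monoid, $\Psi\circ\pi=\widetilde{w}=\mathrm{Ev}_w\circ\pi$, and since $\pi$ is surjective, $\Psi=\mathrm{Ev}_w$. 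The final clause, $\mathrm{Ev}_w(1_{\mathbb{M}})=1$, is just preservation of the neutral element, as in Corollary~\ref{cor:empty-forced}.

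The only step with real content is the well-definedness argument. It depends on showing that the kernel of $\widetilde{w}$ is a congruence and then invoking the minimality of $\equiv_I$. This avoids any explicit analysis of chains of elementary swaps; if that induction were preferred instead, the same computation above serves as the inductive step.
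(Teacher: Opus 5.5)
Your proposal is correct and follows the same route as the paper. Like the paper, you evaluate words through $w$ on the free monoid $\Sigma^\ast$, show this evaluation is constant on $\equiv_I$-classes, factor it through the quotient, and get uniqueness because the classes $[x]$ generate $\mathbb{M}(\Sigma,I)$ (your surjectivity-of-$\pi$ step); your kernel-congruence and minimality argument is a careful version of the paper's one-line remark that each generating step swaps factors that commute in $A$.
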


\begin{proof}
Define the evaluation $\widetilde{\mathrm{Ev}}:\Sigma^\ast\to A$ by
$\widetilde{\mathrm{Ev}}(x_1\cdots x_n)=w(x_1)\cdots w(x_n)$ e $\widetilde{\mathrm{Ev}}(\varepsilon)=1$.
The hypothesis guarantees that $\widetilde{\mathrm{Ev}}$ is constant in the classes of $\equiv_I$ (each generating step swaps factors
that commute in $A$). Therefore, it factors by the quotient, inducing $\mathrm{Ev}_w$. Uniqueness follows because $\mathbb{M}(\Sigma,I)$ is generated by the classes $[x]$.
\end{proof}

\subsection{Heaps: a partial analogue of "well-defined + recursion"}

A presentation consistent with the article's theme is to view features such as \emph{labeled finite posets} (Cartier-Foata heaps), where different equivalent words correspond to different linear extensions of the same poset.

\begin{defn}[Product by linear extension under switching of incomparables]\label{def:poset-product}
Let $(A,\cdot,1)$ be a monoid. Let $(P,\le)$ be a finite poset and $\ell:P\to A$ a labeling. A linear extension is a bijection $e:\{1,\dots,|P|\}\to P$ that preserves the order, that is, $e(i)<e(j)$ in $P$ implies $i<j$. For such $e$, define
\[
\Pi(\ell;e):=\prod_{k=1}^{|P|}\ell(e(k)).
\]
\end{defn}

\begin{lem}[Linear extension invariance]\label{lem:linear-extension-invariance}
Suppose that $\ell(x)\ell(y)=\ell(y)\ell(x)$ whenever $x$ and $y$ are incomparable in $P$. Then $\Pi(\ell;e)$ is independent of the linear extension $e$.
\end{lem}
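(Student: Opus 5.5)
The plan is to show that any two linear extensions of $(P,\le)$ are connected by a finite chain of linear extensions, where consecutive ones differ by swapping two adjacent positions occupied by incomparable elements. Each such swap leaves $\Pi(\ell;e)$ unchanged, by the argument of Lemma~\ref{lem:perm} and Proposition~\ref{prop:pairwise-comm}. Indeed, if $e'=e\circ\tau$ with $\tau$ the adjacent transposition of $i,i+1$, and $e(i),e(i+1)$ are incomparable, associativity lets us isolate the block $\ell(e(i))\,\ell(e(i+1))$. The commutation hypothesis then gives $\ell(e(i))\ell(e(i+1))=\ell(e(i+1))\ell(e(i))$, so $\Pi(\ell;e)=\Pi(\ell;e')$.

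The combinatorial core is connectivity, which I would prove by induction on $n=|P|$ (equivalently, by a bubble-sort argument). Fix linear extensions $e$ and $e'$. Let $y:=e'(1)$, a minimal element of $P$, and let $j$ be such that $e(j)=y$. For each $k<j$, the element $e(k)$ is not above $y$, since $y$ is minimal. It is also not below $y$, since $e(k)<y$ would again contradict minimality. Hence $e(k)$ and $y$ are incomparable for every $k<j$.

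We can therefore move $y$ leftward by successive adjacent transpositions, swapping it with $e(j-1),\dots,e(1)$. Each intermediate bijection is still a linear extension: the only pair whose relative order changes is an incomparable one, so no order constraint is violated. Each step also preserves $\Pi$ by the previous paragraph. This produces a linear extension $e_1$ with $e_1(1)=y=e'(1)$ and $\Pi(\ell;e_1)=\Pi(\ell;e)$.

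The restrictions $k\mapsto e_1(k+1)$ and $k\mapsto e'(k+1)$ are linear extensions of the subposet $P\setminus\{y\}$, and the commutation hypothesis restricts to this subposet. By the induction hypothesis the two restricted products agree. Multiplying both on the left by $\ell(y)$ and using associativity gives $\Pi(\ell;e_1)=\Pi(\ell;e')$. The base cases $n=0$ (both products equal $1$ by convention) and $n=1$ are trivial.

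The main obstacle is the verification in the connectivity step that every intermediate bijection remains order-preserving, together with the incomparability claim for the elements preceding $y$. Both reduce to the minimality of $y=e'(1)$, which holds because $e'$ preserves the order: any $z<y$ would have to appear before position $1$. Once this point is settled, the rest is the same adjacent-transposition bookkeeping as in Lemma~\ref{lem:perm}.
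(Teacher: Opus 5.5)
Your proof is the paper's argument reflected left-to-right. The paper takes the maximal element $e(n)$, moves it to the last position of $e'$ by adjacent swaps, and then removes it, with a separate Case~1 when $e(n)=e'(n)$; you move the minimal element $e'(1)$ to the front of $e$ and remove it, which covers that case as the zero-swap situation.

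One small slip: for $k<j$, the fact that $e(k)$ is not above $y$ does not follow from the minimality of $y$. It follows from $e$ being a linear extension, since $y<e(k)$ would force $j<k$. So your closing remark that everything reduces to minimality should be adjusted accordingly.
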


\begin{proof}
We proceed by induction on $n=|P|$. For $n \le 1$ the result is immediate. Suppose $n \ge 2$ and that the result holds for all posets with fewer than $n$ elements.
Let $e$ and $e'$ be two linear extensions of $P$. It suffices to show that $\Pi(\ell;e)=\Pi(\ell;e')$. Let $m=e(n)$ and $m'=e'(n)$ be the elements placed in the last position by $e$ and $e'$, respectively; both are maximal elements of $P$.

\textit{Case 1: $m=m'$.} 
Then $e$ and $e'$ restrict to linear extensions $\bar e$ and $\bar e'$ of $P\setminus\{m\}$, 
and by the induction hypothesis $\Pi(\ell;\bar e)=\Pi(\ell;\bar e')$. Hence
\[
  \Pi(\ell;e)=\Pi(\ell;\bar e)\cdot\ell(m)
  =\Pi(\ell;\bar e')\cdot\ell(m)
  =\Pi(\ell;e').
\]

\textit{Case 2: $m\neq m'$.} 
Consider the sequence determined by $e'$:
\[
  p_1,\dots,p_n \quad\text{where}\quad p_k=e'(k).
\]
Let $k$ be such that $p_k=m$. Since $m$ is maximal, every element $p_j$ with $j>k$ 
is incomparable with $m$. Move $m$ to the last position by successive adjacent swaps: 
for $j=k,\dots,n-1$ swap $p_j$ and $p_{j+1}$. Each swap is valid (it swaps $m$ with 
an incomparable element) and preserves the linear extension. After these swaps we obtain 
a linear extension $e''$ that ends in $m$, and at each swap the product changes only 
by commuting two factors that commute by hypothesis. Therefore
\[
  \Pi(\ell;e')=\Pi(\ell;e'').
\]
Since $e$ and $e''$ both end in $m$, Case 1 gives $\Pi(\ell;e)=\Pi(\ell;e'')$. 
Combining the equalities, $\Pi(\ell;e)=\Pi(\ell;e')$.
\end{proof}

\begin{defn}[Heap evaluation]\label{def:heap-eval}
Under the hypotheses of the Lemma \ref{lem:linear-extension-invariance}, we define
\[
\mathrm{HeapProd}(P,\le,\ell):=\Pi(\ell;e)
\]
for any linear extension $e$. (We slightly abuse the notation, understanding $\le$ and $\ell$ also as their restrictions to the subposets.)
\end{defn}

\begin{prop}[Recursion by maximal removal]\label{prop:heap-recursion}
In the above scenarios, let $m\in P$ be a maximal element. Then
\[
\mathrm{HeapProd}(P,\le,\ell)=\mathrm{HeapProd}(P\setminus\{m\},\le,\ell)\cdot \ell(m),
\]
with the convention $\mathrm{HeapProd}(\varnothing)=1$.
\end{prop}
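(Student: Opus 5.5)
The plan is to build a linear extension of $P$ that ends in $m$ and then read off the recursion from Definition~\ref{def:heap-eval}.

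First, the subposet $P\setminus\{m\}$ with the restricted order and labeling still satisfies the hypothesis of Lemma~\ref{lem:linear-extension-invariance}. If two elements are incomparable in the subposet, they are incomparable in $P$, so their labels commute. Hence $\mathrm{HeapProd}(P\setminus\{m\},\le,\ell)$ is well defined. When $P\setminus\{m\}=\varnothing$ it equals the empty product $1$, matching the stated convention and the $n=0$ convention of Definition~\ref{def:enumprod}.

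Next, I would choose a linear extension $\bar e:\{1,\dots,n-1\}\to P\setminus\{m\}$, where $n=|P|$. Every finite poset has one: repeatedly remove a minimal element, or argue by induction on size. I would then define $e:\{1,\dots,n\}\to P$ by $e(k)=\bar e(k)$ for $k\le n-1$ and $e(n)=m$.

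The key step is checking that $e$ is a linear extension of $P$, i.e.\ that $e(i)<e(j)$ implies $i<j$. There are three cases:
\begin{enumerate}[label=(\roman*)]
\item If $i,j\le n-1$, this holds because $\bar e$ is a linear extension.
\item If $i=n$, then $e(i)=m<e(j)$ would contradict the maximality of $m$, so this case cannot occur.
\item If $j=n$ and $i\le n-1$, then $i<n=j$ holds automatically.
\end{enumerate}

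Finally, I would compute directly:
\[
\mathrm{HeapProd}(P,\le,\ell)=\Pi(\ell;e)=\Bigl(\prod_{k=1}^{n-1}\ell(\bar e(k))\Bigr)\cdot\ell(m)=\mathrm{HeapProd}(P\setminus\{m\},\le,\ell)\cdot\ell(m).
\]
The first equality uses the independence of the choice of linear extension from Lemma~\ref{lem:linear-extension-invariance}. The middle equality is associativity, splitting off the last factor exactly as in Lemma~\ref{lem:insert}.

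There is no real obstacle here. The only points needing care are that the hypothesis passes to the subposet and that appending a maximal element preserves the linear-extension property.
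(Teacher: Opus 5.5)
Your proposal is correct and follows the same route as the paper: take a linear extension of $P$ ending in $m$, split off the last factor $\ell(m)$, and use the extension-independence from Lemma~\ref{lem:linear-extension-invariance}. You also make explicit the details the paper leaves tacit. You build that extension by appending $m$ to a linear extension of $P\setminus\{m\}$, using maximality in the three-case check, and you verify that the commutation hypothesis restricts to the subposet, so that $\mathrm{HeapProd}(P\setminus\{m\},\le,\ell)$ is well defined.
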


\begin{proof}
Since $m$ is maximal, there exists a linear extension that terminates at $m$. In this extension,
the product is written as the product of the prefix times $\ell(m)$. By the well-definition
(Lemma~\ref{lem:linear-extension-invariance}), the value is independent of the chosen extension.
\end{proof}

\begin{rem}
The Proposition ~\ref{prop:heap-recursion} is the partial analogue of the "insertion step":
The "step" now respects a partial order (concurrency). In particular, the empty case $=1$ remains unavoidable.
\end{rem}

\section{Applications and occurrences}\label{sec:apps}

\subsection{Application 1: Diagonal determinant and the case \texorpdfstring{$0\times 0$}{0x0}}

\begin{prop}[Diagonal matrix determinant]\label{prop:det-diag}
Let $K$ be a commutative ring and $D=\mathrm{diag}(d_1,\dots,d_n)$ a diagonal matrix.
Then, for every $n\ge 0$,
\[
\det(D)=\prod_{i=1}^n d_i,
\]
and in particular $\det([])=1$ when $n=0$.
\end{prop}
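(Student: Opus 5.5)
We will work from the Leibniz formula
\[
\det(D)=\sum_{\sigma\in S_n}\operatorname{sgn}(\sigma)\prod_{i=1}^n D_{i,\sigma(i)},
\]
where the inner product is an $\FProd$ over the index set $\{1,\dots,n\}$ in the commutative monoid $(K,\cdot,1)$. This is well defined by Proposition~\ref{prop:enum-indep}. We first treat $n\ge 1$. For $\sigma\neq\mathrm{id}$ there is an index $i$ with $\sigma(i)\neq i$, so $D_{i,\sigma(i)}=0$. Splitting $\{1,\dots,n\}=(\{1,\dots,n\}\setminus\{i\})\cup\{i\}$ and applying Lemma~\ref{lem:insert}, the product equals (something)$\cdot 0=0$. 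Only $\sigma=\mathrm{id}$ survives. It has sign $1$ and contributes $\prod_{i=1}^n D_{ii}=\prod_{i=1}^n d_i$.

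For $n=0$ the key observation is that $S_0$ is not empty: it contains exactly one element, the empty map $\varnothing\to\varnothing$, which is the identity and has sign $+1$. So the Leibniz sum has exactly one term, $\operatorname{sgn}(\mathrm{id})\cdot\FProd(\,\cdot\,,\varnothing)$. By Lemma~\ref{lem:empty} this is $1\cdot 1=1$. It also agrees with $\prod_{i=1}^0 d_i=1$, again by Lemma~\ref{lem:empty}. Hence $\det([])=1$ is not an extra convention. It follows from the same empty-product principle as Theorem~\ref{thm:unique}, applied at two levels: the inner empty product, and the sum over the singleton $S_0$.

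As a cross-check that does not depend on Leibniz, we could argue with Theorem~\ref{thm:unique}. Set $f(\{1,\dots,k\})=\det\mathrm{diag}(d_1,\dots,d_k)$. Cofactor expansion along the last row gives $f(\{1,\dots,k+1\})=f(\{1,\dots,k\})\cdot d_{k+1}$, since the only nonzero entry of that row is $d_{k+1}$ on the diagonal. This recursion forces the value $1$ at $k=0$ if it is to hold at $k=1$, where $\det[d_1]=d_1$. This is the analogue of Proposition~\ref{prop:distrib-proof}. Strictly, this route runs only along initial segments rather than all of $\Fin(I)$, so we would state it as an ordinary induction on $n$.

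The main obstacle is not computational. It is being precise about the $n=0$ case: we must justify that $S_0=\{\mathrm{id}\}$ has one element and not zero, and that an empty sum of signed terms is not what occurs there. We handle this directly by identifying $S_0$ with the set of bijections $\varnothing\to\varnothing$, which has exactly one element.
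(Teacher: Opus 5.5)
Your proof is correct, but it takes a different route from the paper.

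The paper never expands the determinant. It fixes $n$, sets $I=\{1,\dots,n\}$, and applies Theorem~\ref{thm:unique} to $F(P)=\det\mathrm{diag}(u_1,\dots,u_n)$, where $u_i=d_i$ for $i\in P$ and $u_i=1$ otherwise. Then $F(\varnothing)=\det(I_n)=1$, and by multilinearity in row $x$, ``activating'' $d_x$ gives $F(P\cup\{x\})=F(P)\cdot d_x$. Evaluating at $P=I$ yields the formula. Toggling diagonal entries inside a matrix of fixed size is what lets the recursion run over all of $\Fin(I)$ rather than only along initial segments. That is exactly the limitation you ran into with your cofactor cross-check, where the matrix size changes and you fall back on ordinary induction on $n$.

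The paper's route uses only the characterizing properties of $\det$ (row multilinearity and normalization). It is a direct illustration of the recognition criterion, with no permutation bookkeeping.

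Your route actually computes the case $n=0$. In the paper's argument, the base value $F(\varnothing)=\det(I_0)$ is literally $\det([])$. So the value $1$ there comes from the normalization axiom $\det(I_n)=1$ rather than being derived. Your Leibniz computation instead shows it arising from two separate ``empty'' facts: $|S_0|=1$ (i.e.\ $0!=1$) and the inner empty product from Lemma~\ref{lem:empty}. The price is that you rely on the Leibniz formula. You also use the paper's recursion machinery only marginally: Lemma~\ref{lem:insert} to absorb a zero factor, and Lemma~\ref{lem:empty}.

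One small remark: your separate treatment of $n\ge 1$ and $n=0$ is unnecessary. The argument ``only $\sigma=\mathrm{id}$ survives'' holds vacuously at $n=0$, and the surviving term is then the empty product.
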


\begin{proof}
Consider $I=\{1,\dots,n\}$, $A=K$, and $a(i)=d_i$.
Define $F:\Fin(I)\to K$ by $F(P)=\det(\mathrm{diag}(u_1,\dots,u_n))$, where
$u_i=d_i$ if $i\in P$ and $u_i=1$ otherwise.
Then $F(\varnothing)=\det(I_n)=1$, and if $x\notin P$, activating $d_x$ multiplies the determinant by $d_x$,
that is, $F(P\cup\{x\})=F(P)\cdot d_x$.
By Theorem~\ref{thm:unique}, $F(P)=\FProd(a,P)$; taking $P=I$ we obtain the formula.
If $n=0$, then $I=\varnothing$ and $F(\varnothing)=1$, that is, $\det([])=1$.
\end{proof}

\subsection{Application 2: Kaplan-Meier via uniqueness (product-limit)}

\begin{prop}[Kaplan-Meier as a product characterized by updating]\label{prop:KM}
Consider distinct event times $t_1<\cdots<t_m$.
For each $j$, let $d_j$ be the number of events at $t_j$ and $n_j$ the number at risk immediately before $t_j$.
Define $I:=\{1,\dots,m\}$ and $a:I\to[0,1]$ by
\[
a(j):=1-\frac{d_j}{n_j}.
\]
For $t\in\mathbb{R}$, set $P(t):=\{\,j\in I:\ t_j\le t\,\}\in\Fin(I)$ and define
\[
\widehat S(t):=\FProd(a,P(t))=\prod_{t_j\le t}\left(1-\frac{d_j}{n_j}\right).
\]
Then:
\begin{enumerate}[label=(\alph*)]
\item (\textbf{base}) $\widehat S(t)=1$ for $t<t_1$;
\item (\textbf{step}) for $j\in I$, we have
$\widehat S(t_j)=\widehat S(t_{j-1})\cdot\left(1-\frac{d_j}{n_j}\right)$,
with the convention $\widehat S(t_0):=1$.
\end{enumerate}
\end{prop}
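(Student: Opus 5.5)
The plan is to read both claims off the description of $P(t)$ as $t$ varies, using the recursion clauses of Section~\ref{sec:empty-insert}. Since the event times are strictly ordered, $t_1<\cdots<t_m$, the set $P(t)=\{j: t_j\le t\}$ is always an initial segment $\{1,\dots,k\}$ of $I$. Here $k$ is the largest index with $t_k\le t$, and $k=0$ when no such index exists. In particular, $P(t_j)=\{1,\dots,j\}$ for every $j\in I$, because $t_i\le t_j$ holds exactly when $i\le j$.

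For the base (a): if $t<t_1$, then $t_j\ge t_1>t$ for every $j$, so $P(t)=\varnothing$. Lemma~\ref{lem:empty} then gives $\widehat S(t)=\FProd(a,\varnothing)=1$.

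For the step (b), fix $j\in I$. I will write $P(t_j)=P_{j-1}\cup\{j\}$, where $P_{j-1}:=\{1,\dots,j-1\}$, and note that $j\notin P_{j-1}$. Lemma~\ref{lem:insert} gives $\FProd(a,P(t_j))=\FProd(a,P_{j-1})\cdot a(j)$, so it remains to identify $\FProd(a,P_{j-1})$ with $\widehat S(t_{j-1})$.
\begin{itemize}
\item When $j\ge 2$, $P_{j-1}=P(t_{j-1})$ by the initial-segment description, and the identification is immediate.
\item When $j=1$, $P_0=\varnothing$, so $\FProd(a,P_0)=1=\widehat S(t_0)$ by the stated convention. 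This agrees with (a), since $\widehat S$ equals $1$ on $(-\infty,t_1)$. So the convention is exactly the empty product, not an extra assumption.
\end{itemize}

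The only point needing care is this boundary case $j=1$. The formula $\widehat S(t_{j-1})$ refers to a time $t_0$ that does not exist, and one must check that the convention matches $P_0=\varnothing$. Everything else is direct bookkeeping.

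Optionally, I can also observe that $k\mapsto \FProd(a,\{1,\dots,k\})$ is the unique function satisfying the recursion, by Theorem~\ref{thm:unique} restricted to the chain of initial segments. This shows that the Kaplan--Meier estimator is forced by the base value $1$ together with the multiplicative update.
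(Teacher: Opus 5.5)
Your proof is correct and follows the paper's argument: the base case is $P(t)=\varnothing$ together with Lemma~\ref{lem:empty}, and the step is $P(t_j)=\{1,\dots,j-1\}\cup\{j\}$ with $j\notin\{1,\dots,j-1\}$ together with Lemma~\ref{lem:insert}. Your explicit check of the case $j=1$ is more careful than the paper, which simply writes $P(t_j)=P(t_{j-1})\cup\{j\}$ for all $j$. There, $t_0$ is not an actual time, and the convention $\widehat S(t_0):=1$ has to be matched with $\FProd(a,\varnothing)=1$.
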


\begin{proof}
If $t<t_1$, then $P(t)=\varnothing$ and $\widehat S(t)=\FProd(a,\varnothing)=1$ (Lemma~\ref{lem:empty}).
For $t=t_j$, we have $P(t_j)=P(t_{j-1})\cup\{j\}$ and the Lemma~\ref{lem:insert} gives the update.
\end{proof}

\begin{rem}
The literature frequently mentions explicitly the convention "empty product taken as 1" in technical arguments related to the estimator \cite{MallerZhou93}, in addition to the original article \cite{KaplanMeier58}.
\end{rem}

\subsection{Application 3: Empty product in category theory (terminal object)}

\begin{prop}[The product over an empty family is a terminal object]\label{prop:cat-terminal}
Let $\mathcal{C}$ be a category with finite products. Then the product of an empty family (when defined)
is a terminal object of $\mathcal{C}$.
\end{prop}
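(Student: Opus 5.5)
The plan is to unwind the universal property of a product and specialise it to the empty index set. Recall that a product of a family $(X_i)_{i\in J}$ in $\mathcal{C}$ is an object $X$ together with projections $\pi_i:X\to X_i$. These must satisfy the following: for every object $Y$ and every family of morphisms $f_i:Y\to X_i$, there is a unique $u:Y\to X$ with $\pi_i\circ u=f_i$ for all $i\in J$. We take $J=\varnothing$ and write $T$ for the product of the empty family, which exists by the hypothesis of finite products or by assumption.

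First, for $J=\varnothing$, the data of the product reduce to the object $T$ alone, since the family of projections is empty. Second, fix an arbitrary object $Y$. A family of morphisms $(f_i)_{i\in\varnothing}$ from $Y$ is the empty family, and there is exactly one such family. Third, the universal property applied to this unique empty family yields a unique morphism $u:Y\to T$. The compatibility conditions $\pi_i\circ u=f_i$ are quantified over $i\in\varnothing$, so they hold vacuously for every morphism $Y\to T$. Existence therefore says $\mathrm{Hom}(Y,T)\neq\varnothing$, and uniqueness says any two morphisms $Y\to T$ coincide. Together these give exactly that $T$ is terminal.

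For completeness I would also note the converse. A terminal object $1$ with the empty family of projections satisfies the universal property of the empty product, by the same vacuous argument read backwards. Consequently, the empty product and the terminal object are determined by each other up to unique isomorphism. This parallels Corollary~\ref{cor:empty-forced}: the nullary case is forced to be the unit.

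The main obstacle is not technical but one of careful bookkeeping. One must check that "for all $i\in\varnothing$" conditions really are vacuous. One must also check that uniqueness in the universal property, quantified over morphisms making an empty set of diagrams commute, amounts precisely to uniqueness of morphisms into $T$. I would make both points explicit rather than leaving them implicit.
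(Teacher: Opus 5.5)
Your proposal is correct and follows the paper's argument. Both unwind the universal property at $J=\varnothing$, observe that the empty family of morphisms is unique and that the compatibility conditions hold vacuously, and conclude there is exactly one morphism $Y\to\prod_{\varnothing}X_i$ for every $Y$; your converse and explicit bookkeeping are sound extras but not needed for the statement.
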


\begin{proof}
By definition, a product $\prod_{i\in J} X_i$ is characterized by the universal property:
for every object $Y$, giving a morphism $Y\to \prod_{i\in J} X_i$ is equivalent to giving a family of morphisms $(Y\to X_i)_{i\in J}$. If $J=\varnothing$, such a family is empty data, hence there exists exactly one morphism $Y\to \prod_{\varnothing} X_i$ for every $Y$. This is precisely the definition of a terminal object.
\end{proof}

\subsection{Application 4: Infinite products in analysis}\label{subsec:inf-prod}

\begin{prop}[Infinite products and finite case normalization]\label{prop:inf-prod}
Let $(a_n)_{n\ge 1}$ be a sequence in $\mathbb{C}$ and, for $N\ge 0$, define the partial product
\[
P_N := \prod_{n=1}^N (1+a_n),
\]
with the convention $P_0:=1$. Then:
\begin{enumerate}[label=(\alph*)]
\item For every $1\le m\le N$,
\[
P_N \;=\; P_{m-1}\cdot \prod_{n=m}^N (1+a_n),
\]
where the product on the right is interpreted as the product over the finite subset $\{m,\dots,N\}\subseteq\mathbb{N}$.
\item If there exists $N_0\ge 0$ such that $a_n=0$ for all $n>N_0$, then the infinite product
\[
\prod_{n=1}^\infty (1+a_n)
\]
converges and coincides with the finite product $\prod_{n=1}^{N_0} (1+a_n)$. In particular, in the case $N_0=0$,
one obtains $\prod_{n=1}^\infty (1+a_n)=1$, that is, the empty product has value $1$.
\end{enumerate}
\end{prop}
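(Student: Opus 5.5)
For part (a), I will work in the commutative monoid $(\mathbb{C},\cdot,1)$ with index set $I=\mathbb{N}$ and family $a'(n):=1+a_n$. With this reading, $P_N=\FProd(a',\{1,\dots,N\})$: for $N\ge1$ the identity enumeration $k\mapsto k$ is a valid bijection in Definition~\ref{def:fprod}, and for $N=0$ we have $\{1,\dots,0\}=\varnothing$, so Lemma~\ref{lem:empty} gives $P_0=1$. This matches the stated convention.

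Now fix $1\le m\le N$. The sets $Q:=\{1,\dots,m-1\}$ and $R:=\{m,\dots,N\}$ are disjoint, and their union is $\{1,\dots,N\}$. I need the multiplicativity $\FProd(a',Q\cup R)=\FProd(a',Q)\cdot\FProd(a',R)$. Proposition~\ref{prop:disjoint-char} assumes this property instead of proving it, so I will prove it directly. The cleanest route is to fix $Q$ and set $G(R):=\FProd(a',Q)\cdot\FProd(a',R)$ on finite sets $R$ disjoint from $Q$, then induct on $|R|$ in the style of Lemma~\ref{lem:fin-ind}. In the base case $R=\varnothing$ both sides equal $\FProd(a',Q)$, by Lemma~\ref{lem:empty}. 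For the step, I add $x\notin Q\cup R$ and apply Lemma~\ref{lem:insert} on both sides, using associativity. A more concrete alternative is to concatenate the enumerations of $Q$ and $R$, which gives an enumeration of $Q\cup R$, and then use Proposition~\ref{prop:enum-indep} and associativity. The case $m=1$ is exactly where the empty product matters: there $Q=\varnothing$, $P_0=1$, and the identity holds trivially.

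For part (b), suppose $a_n=0$ for all $n>N_0$. For $N\ge N_0$, I apply part (a) with $m=N_0+1$ when $N>N_0$, and the case $N=N_0$ is trivial. This gives $P_N=P_{N_0}\cdot\FProd(a',\{N_0+1,\dots,N\})$. Every factor in the second product equals $1+0=1$. Hence that product is $1$, which follows either by induction with Lemmas~\ref{lem:empty} and~\ref{lem:insert} or by Proposition~\ref{prop:hom-prod}-style reasoning applied to the constant family. So the sequence $(P_N)$ is eventually constant and equal to $P_{N_0}$, and it therefore converges to $P_{N_0}$.

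I take convergence of the infinite product to mean convergence of the partial products $P_N$. If the paper intends the stricter analytic definition, where the limit must be nonzero or the product may omit finitely many zero factors, I will add a remark: the tail products are identically $1$, so the product converges in that sense as well. This is true even when $P_{N_0}=0$, because only finitely many factors can vanish. When $N_0=0$ the limit is $P_0=1$, which is the empty product. I do not expect any real obstacle here. The only point that needs care is the disjoint-union multiplicativity in part (a), which the earlier results do not state outright.
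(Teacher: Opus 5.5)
Your proof is correct and has the same skeleton as the paper's: write $P_N=\FProd(a',\{1,\dots,N\})$, split $\{1,\dots,N\}$ into the disjoint blocks $\{1,\dots,m-1\}$ and $\{m,\dots,N\}$, and for (b) observe that $(P_N)$ is eventually constant.

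The one substantive difference is how you justify the splitting identity $\FProd(a',Q\cup R)=\FProd(a',Q)\cdot\FProd(a',R)$. The paper simply cites Proposition~\ref{prop:disjoint-char}. As you noticed, that proposition takes disjoint-union multiplicativity as its hypothesis (b) and concludes $F=\FProd(a,\cdot)$. It never shows that $\FProd$ itself has this property, so the paper's citation is circular as written. Your induction on $|R|$ supplies exactly the missing fact: the base case comes from Lemma~\ref{lem:empty}, and the step from Lemma~\ref{lem:insert} applied to both sides plus associativity. Your alternative, concatenating enumerations and invoking Proposition~\ref{prop:enum-indep}, works equally well. Either way, your version of (a) is the more complete of the two.

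In (b) you also verify explicitly that the tail product of factors equal to $1$ is $1$, which the paper takes for granted. Your remark on the stricter analytic notion of convergence is correct but not needed here. The paper's remark following the proposition defines convergence of the infinite product as convergence of $(P_N)$.
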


\begin{proof}
For each $N \ge 0$, write
\[
P_N \;=\; \FProd(a,\{1,\dots,N\}),
\]
where $a(n) := 1 + a_n$ and, by convention, $\{1,\dots,0\} := \varnothing$. Let $1 \le m \le N$ and set
$P := \{1,\dots,m-1\}$ and $Q := \{m,\dots,N\}$; then $P, Q \in \Fin(\mathbb{N})$ are disjoint and $P \cup Q = \{1,\dots,N\}$.
By Proposition~\ref{prop:disjoint-char},
\[
P_N
= \FProd(a, P \cup Q)
= \FProd(a, P)\cdot \FProd(a, Q)
= P_{m-1}\cdot \prod_{n=m}^N (1+a_n),
\]
which proves \textup{(a)}.

For \textup{(b)}, if $a_n = 0$ for $n > N_0$, then $(1 + a_n) = 1$ for $n > N_0$, hence
\[
P_N = \prod_{n=1}^N (1+a_n) = \prod_{n=1}^{N_0} (1+a_n)
\]
for all $N \ge N_0$. Thus, the sequence $(P_N)_{N \ge 0}$ is eventually constant, and the infinite product
(defined as the limit of $P_N$) converges to $\prod_{n=1}^{N_0} (1+a_n)$. In the extreme case $N_0 = 0$, this says
precisely that the infinite product equals $P_0 = 1$, that is, the empty product has value $1$.
\end{proof}

\begin{rem}
In real and complex analysis, an \emph{infinite product} is defined precisely as the limit of partial products
$(P_N)_{N\ge 0}$. The convention $P_0=1$ causes degenerate cases (with only a finite number of factors
different from $1$) to fit naturally into the theory, without ad hoc distinctions. The Proposition ~\ref{prop:inf-prod} shows that this convention is compatible with the description via $\FProd$ and with the decomposition into finite blocks.
\end{rem}

\subsection{Explicit occurrences of the convention in the literature}

To make this note useful as a \emph{citable reference}, we record peer-reviewed examples in which the convention
is stated explicitly:
\begin{itemize}
\item Markov processes / pure-birth processes: \cite{MicloZhang21};
\item combinatorial expansions in probability: \cite{ErnstReinertSwan22};
\item products of random variables: \cite{Gaunt18};
\item normality / Hausdorff dimension: \cite{Mance15};
\item probabilistic verification in computer science: \cite{EsparzaKuceraMayr06}.
\end{itemize}

\section{Conclusion}

Starting from commutative monoids, we construct the finite product directly on sets and prove that it is simultaneously well-defined and unique under the pair (empty case + insertion step). The Theorem crystallizes this fact as a true principle of recursion in $\Fin(I)$: every function $f:\Fin(I)\to A$ that respects these two clauses necessarily coincides with $\FProd(a,\cdot)$. In this way, the convention "empty product $=1$" ceases to be an isolated addendum and becomes part of the recursive scheme itself that characterizes finite products.

We also saw that the value $1$ is unavoidable through completely independent means: as preservation of the neutral element in the free monoid of lists (without requiring commutativity), as a mandatory constant term in distributive identities of semi-rings, and as preservation of the neutral element in the commutative free monoid generated by multisets of finite support. The extension to partially commutative products via traces and heaps shows that the same triad—well-definedness, recursion, and neutral element—continues to operate even when commutativity is relaxed in a controlled manner.

Finally, it is worth emphasizing that the additive version discussed in the text is not merely a notational parallel, but the exact structural counterpart of the entire construction developed here. By replacing the multiplicative monoid $(A,\cdot,1)$ with the additive monoid $(A,+,0)$, the same recursion scheme in $\Fin(I)$ shows that the finite sum over sets is equally well-defined and uniquely determined by its basic clauses, which necessarily enforces the identity $\sum_{x\in\varnothing} a(x)=0$. Thus, "empty product $=1$" and "empty sum $=0$" should be understood together: both express the preservation of the identity element when moving from a binary operation to a finite aggregation indexed by sets.

Finally, applications in linear algebra (determinant $0\times 0$), survival statistics (Kaplan-Meier estimator), category theory (empty product as a terminal object), and analysis (infinite products), along with the gallery of explicit occurrences in the literature, reinforce the central message: "empty product $=1$" is not a mere convention of convenience, but the natural and systematic manifestation of a deep algebraic principle. By formulating finite products through recursion in $\Fin(I)$, this principle becomes transparent and, above all, inevitable.

\section*{Acknowledgments}

\bibliographystyle{plain}
\bibliography{referencias}

\end{document}